\newtheorem{thm}{Theorem}[section] 
\newtheorem{lemma}[thm]{Lemma}  
\newtheorem{cor}[thm]{Corollary}
\theoremstyle{definition}  
\newtheorem{example}[thm]{Example}
\newtheorem{defn}[thm]{Definition}
\theoremstyle{remark}  
\newtheorem{remark}[thm]{Remark}
\newcommand{\calB}{\ensuremath{\mathcal{B}} }
\newcommand{\calC}{\ensuremath{\mathcal{C}} }
\newcommand{\calD}{\ensuremath{\mathcal{D}} }
\newcommand{\calL}{\ensuremath{\mathcal{L}} }
\newcommand{\calN}{\ensuremath{\mathcal{N}} }
\newcommand{\calR}{\ensuremath{\mathcal{R}} }
\newcommand{\calS}{\ensuremath{\mathcal{S}} }
\newcommand{\bfzero}{\ensuremath{\bm{0}} }
\newcommand{\bfa}{\ensuremath{\bm a} }
\newcommand{\bfb}{\ensuremath{\bm b} }
\newcommand{\bfd}{\ensuremath{\bm d} }
\newcommand{\bfe}{\ensuremath{\bm e} }
\newcommand{\bfg}{\ensuremath{\bm g} }
\newcommand{\bfq}{\ensuremath{\bm q} }
\newcommand{\bfs}{\ensuremath{\bm s} }
\newcommand{\bft}{\ensuremath{\bm t} }
\newcommand{\bfv}{\ensuremath{\bm v} }
\newcommand{\bfw}{\ensuremath{\bm w} }
\newcommand{\bfx}{\ensuremath{\bm x} }
\newcommand{\bfy}{\ensuremath{\bm y} }
\newcommand{\bfz}{\ensuremath{\bm z} }
\newcommand{\frakg}{\ensuremath{\mathfrak{g}} }
\newcommand{\boldN}{\ensuremath{\mathbb N}}
\newcommand{\boldP}{\ensuremath{\mathbb P}}
\newcommand{\boldR}{\ensuremath{\mathbb R}}
\newcommand{\boldZ}{\ensuremath{\mathbb Z}}
\newcommand{\ad}{\operatorname{ad}}
\newcommand{\Aut}{\operatorname{Aut}}
\newcommand{\col}{\operatorname{Col}}
 \newcommand{\diag}{\operatorname{diag}}
 \newcommand{\Exp}{\operatorname{Exp}}
 \newcommand{\myspan}{\operatorname{span}}
 \newcommand{\Null}{\operatorname{Null}}
 \newcommand{\sgn}{\operatorname{sgn}}
 \newcommand{\sign}{\operatorname{sign}}
 \newcommand{\Ln}{\operatorname{Ln}}
 \newcommand{\E}{\operatorname{E}}
\newcommand{\la}{\langle} 
\newcommand{\ra}{\rangle}
\begin{document}

 \title[Parametrizing varieties of Lie algebras]{Methods for
   Parametrizing varieties of Lie algebras}

 \email{payntrac@isu.edu}

\subjclass{Primary: 17B30; Secondary:   22-04, 22E15, 22E25}

\begin{abstract}    
  A real $n$-dimensional anticommutative nonassociative algebra is
  represented by an element of $\wedge^2(\boldR^n)^\ast \otimes
  \boldR^n.$ For each $\mu \in \wedge^2(\boldR^n)^\ast \otimes
  \boldR^n,$ there is a unique subset $\Lambda \subseteq \{ (i,j,k) \,
  : \,1 \le i < j \le n , 1 \le k \le n\}$ so that the structure
  constant $\mu_{ij}^k$ with $i < j$ is nonzero if and only if
  $(i,j,k) \in \Lambda.$ The set of all $\mu \in
  \wedge^2(\boldR^n)^\ast \otimes \boldR^n$ with subset $\Lambda$ is
  denoted $\calS_\Lambda(\boldR);$ these sets stratify
  $\wedge^2(\boldR^n)^\ast \otimes \boldR^n.$ We describe how to
  smoothly parametrize the Lie algebras in $\calS_\Lambda(\boldR)$ up
  to isomorphism, for $\Lambda$ satisfying certain frequently seen
  hypotheses.   
\end{abstract} 

\maketitle

 \medskip

 \noindent{\em Keywords:  varieties of  Lie algebras,  nilpotent Lie algebra}

 \medskip

 \noindent{\em Corresponding author: Tracy L.\ Payne, Department of Mathematics,  Idaho State University, 
 921 S. 8th Ave.,
 Pocatello, ID 83209-8085, USA.} {\tt payntrac@isu.edu} {\em (208) 282-3650, FAX (208) 282-2636 }

 \medskip


 \section{Introduction}\label{introduction}


 In many areas of mathematics, it is natural to subdivide an object
into smaller, simpler pieces and to parametrize each piece in some
useful, controlled way (\cite {yomdin-14}).  A {\em semi-algebraic
set} is a subset of $\boldR^n$ defined by a finite number of
polynomial equations and inequalities, along with the operations of
intersection and union.  Every closed and bounded semi-algebraic set
is semi-algebraically triangulable (\cite{benedetti-risler}).  A {\em
parametrization} of semi-algebraic subset $A$ of $\boldR^n$ is a
collection of semi-algebraic subsets $A_j$ that cover $A,$ along with
surjective {\em charts} $\varphi_j : I^{n_j} \to A_j,$ where $I^{n_j}$
is a cube in $\boldR^{n_j},$ such that each chart $\varphi_j$ is
algebraic and is a homeomorphism from the interior of $I^{n_j}$ to the
interior of $A_j.$

 Here we are interested in parametrizing varieties of Lie algebras.
These sets are not semi-algebraic; rather, they are quotients of
semi-algebraic sets by the action of a Lie group.  Let $A$ be a
semi-algebraic subset of $\boldR^n$ and let $A/{\sim}$ be the quotient
of $A$ under the action of a group.  Let $\{A_j\}$ be a collection of
invariant semi-algebraic subsets that cover $A.$ A {\em
parametrization} of $A/{\sim}$ is the collection $\{A_j/{\sim}\}$ of
semi-algebraic quotients, along with {\em charts} $\varphi_j : I^{n_j}
\to A_j/{\sim},$ such that each chart $\varphi_j$ is algebraic and is
a homeomorphism from the interior of $I^{n_j}$ to the interior of
$A_j/{\sim}.$ Our goal is to find explicit parametrizations of
specific subsets of the variety $\widetilde \calN_n(\boldR)$ of real
nilpotent Lie algebras of dimension $n.$

 Real nilpotent Lie algebras of dimension 7 and lower have been
classified (\cite{morozov-58, seeley-93, gong-98}).  For $n \le 6,$
$\widetilde \calN_n(\boldR)$ is discrete.  The space $\widetilde
\calN_7(\boldR)$ is the union of isolated points and one-parameter
families.  In dimension eight and higher, nilpotent Lie algebras have
not been classified, and there are many components of $\widetilde
\calN_n(\boldR)$ of higher dimension (\cite{nil-dim8-moduli}).
Furthermore, there are large families of ``characteristically
nilpotent'' Lie algebras (\cite{hakimjanov}); these do not admit
nontrivial semisimple derivations.  Due to these complications, in
dimension eight and higher, instead of analyzing all of $\widetilde
\calN_n(\boldR),$ it is natural to focus on a tractable subset $\calR$
of $\widetilde \calN_n(\boldR).$ For example, $\calR$ might be
\begin{itemize}
\item  the set of two-step nilpotent Lie algebras  
\item  the set of filiform or quasi-filiform Lie algebras 
\item  the set of  $\boldN$-graded or naturally graded nilpotent Lie
  algebras 
\item a set of nilpotent Lie algebras that admit a special kind of
  structure (affine, symplectic, contact, almost-complex, K\"ahler, etc.).
\end{itemize}  
Or, $\calR$ might be defined as the intersection of two or more such
sets.  

The subclass of filiform Lie algebras is not discrete in dimensions
seven and higher and has been the setting for many kinds of
classification problems. (See, for example, \cite{vergnethesis},
\cite{millionschikov}, \cite{burde-06}, \cite{arroyo-11}.)  The
classification of complex two-step nilpotent Lie algebras of dimension
9 and lower was completed in \cite{galitski-timashev}, and dimensions
of moduli spaces of two-step nilpotent algebras of types $(p,q)$ were
found in \cite{eberlein-03}.

  Another subclass of nilpotent Lie algebras is the set of nilpotent
Lie algebras whose ``Nikolayevsky derivation'' has positive
eigenvalues all of multiplicity one.  (See \cite{nikolayevsky-simple}
for a definition of the Nikolayevsky derivation.)  A subset of this
subclass in dimensions 7 and 8 was classified in
\cite{kadioglu-payne}.  In fact, the motivation for this work is to
develop the necessary tools for the completion in \cite{payne-comp-II}
of the classification begun in \cite{kadioglu-payne}.  In
\cite{payne-comp-II}, we classify all nilpotent Lie algebras of
dimensions 7 and 8 for which the Nikolayevsky derivation is simple
with positive eigenvalues.  The methods developed here are essential
in \cite{payne-comp-II} for the determination of isomorphism classes
of nilpotent Lie algebras and for the parametrization of continuous
families of those isomorphism classes.

Our goal is to find parametrizations of subclasses of the space
$\widetilde \calN_n(\boldR)$ of real $n$-dimensional nilpotent Lie
algebras, or, more generally, the space $\widetilde \calL_n(K)$ of
$n$-dimensional Lie algebras over the field $K.$ In attempting to
classify some family of Lie algebras up to isomorphism, one frequently
shows that there is some efficient basis with respect to which nonzero
structure constants for algebras in the class must have specified
indices, and then one determines which of the algebras with such
structure constants are isomorphic.

Oftentimes a subclass itself may be completely described in terms of
indices of nonvanishing structure constants.  To be precise, given a
Lie algebra with basis $\calB,$ there exists a subset $\Lambda$ of
\begin{equation}\label{upsilon n} \Upsilon_n = \{ (i,j,k) \, : \,
i,j,k \in [n], i< j\} \subseteq [n]^3,
\end{equation} where we denote the set $\{1, 2, \ldots, n\}$ by $[n],$
so that the structure constant $\alpha_{ij}^k$ (with $i<j$) is nonzero
if and only if $(i,j,k) \in \Lambda.$ The subclass $\widetilde
\calL_\Lambda(K)$ of $\widetilde \calL_n(K)$ is defined to be the set
of all Lie algebras over $K$ that admit a basis $\calB$ so the
structure constants with respect to $\calB$ are indexed by $\Lambda.$
Such sets are quotients of semi-algebraic subsets of
$\wedge^2(K^n)^\ast \otimes K^n.$ Often, subsets $\calR$ of
$\widetilde \calL_n(K)$ may be written as the union of sets
$\widetilde \calL_\Lambda(K)$ as $\Lambda$ varies over a subset of
$\Upsilon_n.$

For example, let $\calR = \widetilde \calN_n(K),$ the set of
$n$-dimensional nilpotent Lie algebras over $K.$ If the Lie algebra
$\frakg_\alpha$ is nilpotent, then by Engel's Theorem there is a basis
$\calB = \{ x_i\}_{i=1}^n$ for $\frakg_\alpha$ with respect to which
the operators $\ad_{x_i}$ are simultaneously upper triangularizable.
We say that such a basis $\calB$ is a {\em triangular basis}. Relative
to $\calB,$ the structure constants for $\frakg_\alpha$ (modulo
skew-symmetry) are in the set
\[ \Theta_n = \{(i,j,k) \in [n]^3\, : \, i < j < k \} \subseteq
\Upsilon_n. \] Hence every $n$-dimensional nonabelian nilpotent Lie
algebra over $K$ is in the set $\cup_{\Lambda \subseteq \Theta_n}
\widetilde \calL_\Lambda(K).$ Conversely, the nilpotency condition
holds automatically for every Lie algebra $\frakg_\alpha$ with
structure constants indexed by a subset of $\Theta_n$ relative to some
basis.  Many other classes of nilpotent Lie algebras, including
filiform and $\boldN$-graded Lie algebras, may be expressed as the
union of sets $\widetilde \calL_\Lambda(K).$

To parametrize such a set $\calR = \cup_{\Lambda \in S} \widetilde
\calL_\Lambda(K),$ we will find a parametrization of each set
$\widetilde \calL_\Lambda(K).$ We assume that the sets $\widetilde
\calL_\Lambda(K)$ are disjoint and neglect the issue of when
isomorphic Lie algebras occur in different sets $\widetilde
\calL_\Lambda(K);$ in practice, the set $S$ may often be chosen so
that each Lie algebra in $\calR$ occurs in exactly one $\widetilde
\calL_\Lambda(K)$ with $\Lambda \in S.$

 Although we are primarily motivated by the goal of understanding the
class of real nilpotent Lie algebras, we will work in a more general
setting.  We will drop the constraint imposed by the Jacobi Identity
and consider the set $\calS_\Lambda(K)$ of all anticommutative
nonassociative algebras over a field $K$ whose structure constants are
indexed by an index set $\Lambda.$ We will first consider the question
of when products having the same index set $\Lambda$ are isomorphic.
We will then consider which products having the index set $\Lambda$
satisfy Jacobi Identity.  This will enable us to find a compact
semi-algebraic subset $\Sigma$ of $\boldR^{|\Lambda|}$ whose interior
is homeomorphic to $\widetilde \calL_\Lambda(\boldR).$ Then a
parametrization of the set $\Sigma$ yields the desired parametrization
for $\widetilde \calL_\Lambda(\boldR).$

The parametrization depends largely on the combinatorics of the index
set $\Lambda.$ It turns out that many index sets $\Lambda \subseteq
\Theta_n$ have the same combinatorial relationships, so the
descriptions of the parametrizations of the sets $\widetilde
\calL_\Lambda(\boldR),$ with $\Lambda \subseteq \Theta_n,$ fall into a
small number of classes in each dimension.

It should also be mentioned that the methods presented here provide a
way to test whether two Lie algebras having the same index set are
isomorphic.

The paper is organized as follows.  In the next section, we state the
main results.  In Section \ref{preliminaries}, necessary background
material is covered, and our method of parametrizing classes of Lie
algebras is outlined.  In Section \ref{cross sections} we focus on
defining a subset $\Sigma$ of $\calS_\Lambda(\boldR)$ so that each Lie
algebra in $\calS_\Lambda(\boldR)$ is isomorphic to exactly one Lie
algebra in $\Sigma.$ In Section \ref{combinatorics}, we define some
combinatorial objects associated to a set $\Lambda$ and describe some
useful properties they have.  In Section \ref{jacobi identity}, we
address issues related to the Jacobi Identity and find how to
parametrize sets of form $\widetilde \calL_\Lambda(\boldR).$ Due to
the many technicalities involved, and our orientation to applications,
we include many concrete examples along the way.

\section{Main results}

\subsection{Necessary definitions}
Let $K$ be a field.  For $\alpha \in \wedge^2(K^n)^\ast \otimes K^n,$
we let $\frakg_\alpha$ denote the anticommutative nonassociative
algebra whose product is defined by $\alpha.$ Let $\calB = \{
x_i\}_{i=1}^n$ be a basis for $K^n,$ and for $\alpha \in
\wedge^2(K^n)^\ast \otimes K^n,$ let $\alpha_{ij}^k$ denote the
structure constants for $\alpha$ relative to $\calB.$

For a subset $\Lambda$  of $\Upsilon_n,$ define the subset 
$\calS_\Lambda(K)$ of $\wedge^2(K^n)^\ast \otimes K^n$ by
\begin{align*}
 \calS_\Lambda(K) = \{ \alpha \in \wedge^2(K^n)^\ast \otimes K^n \, : \,
 i < j \, \text{ and}  \enskip \alpha_{ij}^k \ne 0 & 
\enskip \text{if and only if} \\ 
(i,j,k) \in \Lambda \}. \end{align*}
Allowing $\Lambda$ to vary over all subsets of $\Upsilon_n,$ we obtain
a  semi-algebraic stratification
\begin{equation}\label{partition}
\wedge^2(K^n)^\ast \otimes K^n = \bigcup_{\Lambda \subseteq
  \Upsilon_n} \calS_\Lambda(K)
\end{equation}
of $\wedge^2(K^n)^\ast \otimes K^n.$   

Let $[n]= \{1, 2, \ldots, n\}.$ After taking the dictionary ordering
on $[n]^3,$ the set
\[ \calC = \{ (x_i^\ast \wedge x_j^\ast) \otimes x_k : (i,j,k) \in
\Upsilon_n\} \] 
is an ordered basis for $\wedge^2(K^n)^\ast \otimes K^n.$ Given a
vector $\alpha$ in $\calS_\Lambda(K) \subseteq \wedge^2(K^n)^\ast
\otimes K^n,$ the nonzero entries of its coordinate vector with
respect to $\calC$ are indexed by $\Lambda;$ for this reason we call
the sets $\Lambda$ {\em index sets}.  The coordinate vectors define a
bijection between $\calS_\Lambda(K)$ and $(K \setminus
\{0\})^{|\Lambda|}.$ We sometimes implicitly identify a product in
$\calS_\Lambda(K)$ with a vector in $(K \setminus \{0\})^{|\Lambda|}.$

Let $\calL_n(K)$ be the subset of $\wedge^2(K^n)^\ast \otimes K^n$
consisting of those bilinear maps in $\wedge^2(K^n)^\ast \otimes K^n$
that define products that satisfy the Jacobi Identity.  The
stratification in \eqref{partition} restricts to a stratification of
$\calL_n(K):$ If we let
\[ \calL_\Lambda(K) = \calS_\Lambda(K) \cap \calL_n(K) \] be the set
of Lie brackets whose structure constants are indexed by $\Lambda,$
then
\begin{equation}\label{decomposition} 
\calL_n(K) = \bigcup_{\Lambda \subseteq \Upsilon_n} \calL_\Lambda(K) 
\end{equation} is a
stratification of the set $\calL_n(K)$ of all Lie brackets.
Although each stratum $\calS_\Lambda(K) $ is
nonempty, it is possible that a set $\calL_\Lambda(K) $ is empty.

A nonabelian Lie algebra is represented by many different maps
$\alpha$ in $\calL_n(K).$ The general linear group $GL_n(K)$ acts on
$\wedge^2(K^n)^\ast \otimes K^n,$ with
\begin{equation}\label{GL_n(R) action} (g \cdot \alpha) (x,y) = 
g ( \alpha (g^{-1}x,g^{-1}y) )\end{equation}
for $g \in GL_n(K),$ $\alpha \in \wedge^2(K^n)^\ast \otimes K^n$ and
$x,y \in K^n.$ The space $\widetilde \calL_n(K)$ of all
$n$-dimensional Lie algebras over $K$ is the quotient of $\calL_n(K)$
under this action.  Any choice of initial basis for $K$ will yield the
same quotient space $\widetilde \calL_n(K).$ Therefore, we will often
omit mention of a particular basis when referring to the sets
$\calS_\Lambda(K),$ $\calL_\Lambda(K),$ etc.

\subsection{Statements of main results}

We would like to understand how the combinatorics of a set $\Lambda
\subseteq \Theta_n$ relate to properties of the set of nilpotent Lie
algebras whose structure constants are indexed by $\Lambda.$ For this
purpose, we make some combinatorial definitions in Section
\ref{combinatorics}.  We define what it means for a pair of distinct
triples $\bft_1$ and $\bft_2,$ where $\bft_1, \bft_2 \in \Theta_n
\subseteq [n]^3,$ to be {\em aligned.}  For an aligned pair of triples
$\bft_1$ and $\bft_2$ in $\Theta_n \subseteq [n]^3,$ we define a {\em
quadruple} $q(\bft_1,\bft_2) \in [n]^4,$ and a sign $\sign(\bft_1,
\bft_2) \in \boldZ_2$ for that pair of triples.  To each $\Lambda
\subseteq \Theta_n,$ we associate the set $Q \subseteq [n]^4$ of all
quadruples arising from all aligned pairs of triples $\bft_1, \bft_2
\in \Lambda.$

We would first like to know when $\calL_\Lambda(K)$ is guaranteed to
be empty: what combinatorial conditions on the set $\Lambda$ imply
that no elements of $\calS_\Lambda(K)$ satisfy the Jacobi Identity?
To this end, we formulate the Jacobi Identity for elements of
$\calS_\Lambda(K)$ in the language of triples and quadruples.
\begin{thm}\label{ji-quads-signs} 
Let $\Lambda \subseteq \Theta_n$ be an index set and let 
\[ \bft_1 = (i_1,j_1,k_1), \bft_2 = (i_2,j_2,k_2), \ldots, \bft_m =
(i_m,j_m, k_m)\] be an enumeration of the triples in $\Lambda.$ Let
$Q$ be the set of quadruples for $\Lambda.$ The Jacobi Identity for
elements of $\calS_\Lambda(K)$ is equivalent to the system of
equations
\begin{equation}\label{ji-qt}  
\sum_{q(\bft_p,\bft_r) = (i,j,k,l)}   \sign(\bft_p,\bft_r) \alpha_{i_p j_p}^{k_p}
\alpha_{i_r j_r}^{k_r}= 0, \quad (i,j,k,l) \in Q. \end{equation}
\end{thm}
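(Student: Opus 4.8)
The plan is to expand the Jacobi Identity directly in coordinates and then reorganize the resulting sum according to which quadruple each term contributes to. First I would recall that the Jacobi Identity for the product $\alpha$ is the statement that $\sum_{\text{cyc}} \alpha(\alpha(x,y),z) = 0$ for all $x,y,z$, and that by multilinearity and anticommutativity it suffices to check this on basis triples $x_a, x_b, x_c$ with $a < b < c$. Writing out $\alpha(\alpha(x_a,x_b),x_c) + \alpha(\alpha(x_b,x_c),x_a) + \alpha(\alpha(x_c,x_a),x_b)$ in terms of the structure constants $\alpha_{ij}^k$ and the basis $\calB$, the coefficient of $x_l$ in this expression is a quadratic polynomial $J_{abc}^l$ in the $\alpha_{ij}^k$, and the Jacobi Identity is equivalent to $J_{abc}^l = 0$ for all $a < b < c$ and all $l \in [n]$.

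Next I would observe that each monomial appearing in $J_{abc}^l$ has the form $\pm\,\alpha_{i_1 j_1}^{k_1}\alpha_{i_2 j_2}^{k_2}$, where the two triples $\bft_1 = (i_1,j_1,k_1)$ and $\bft_2 = (i_2,j_2,k_2)$ are precisely a pair of triples that combine to involve the four indices $\{a,b,c,l\}$ in the prescribed nested way --- this is exactly the definition of $\bft_1$ and $\bft_2$ being \emph{aligned}, with $q(\bft_1,\bft_2)$ the associated quadruple and $\sign(\bft_1,\bft_2)$ the associated sign coming from the cyclic permutation and anticommutativity bookkeeping. Conversely, every aligned pair of triples from $\Theta_n$ contributes a monomial to exactly one of the polynomials $J_{abc}^l$, namely the one indexed by its quadruple. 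Hence $J_{abc}^l = \sum_{q(\bft_p,\bft_r) = (a,b,c,l)} \sign(\bft_p,\bft_r)\,\alpha_{i_p j_p}^{k_p}\alpha_{i_r j_r}^{k_r}$ after discarding terms with vanishing structure constants. Since $\alpha \in \calS_\Lambda(K)$ forces $\alpha_{ij}^k \ne 0$ exactly when $(i,j,k) \in \Lambda$, only pairs of triples lying in $\Lambda$ survive, and the set of quadruples that occur as $q(\bft_p,\bft_r)$ for such pairs is by definition the quadruple set $Q$ for $\Lambda$; quadruples outside $Q$ give identically zero equations and may be dropped. This yields precisely the system \eqref{ji-qt}.

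The main obstacle I anticipate is the careful verification that the combinatorial data $q(\bft_1,\bft_2)$ and $\sign(\bft_1,\bft_2)$ defined in Section \ref{combinatorics} really do record the correct index quadruple and sign for every monomial that arises --- in particular, keeping straight the three cyclic terms, the anticommutativity normalization $i < j$ on each structure constant, and the possibility that two different aligned pairs of triples produce the same quadruple and must be summed. I would handle this by writing out the generic expansion once, identifying the combinatorial shape of an arbitrary surviving monomial, and checking it against each case in the definition of alignment (there are only finitely many relative orderings of the four relevant indices to consider), so that the bookkeeping reduces to a short finite case analysis rather than an open-ended computation. Once the term-by-term correspondence is established, collecting terms by quadruple and invoking the nonvanishing characterization of $\calS_\Lambda(K)$ finishes the proof.
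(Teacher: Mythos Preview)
Your proposal is correct and follows essentially the same route as the paper: both arguments reduce the Jacobi Identity to a finite case analysis (the six orderings in Table~1/Table~2) verifying that the sign convention $\sign(\bft_1,\bft_2)$ matches the sign picked up when each cyclic term $\alpha_{ij}^s\alpha_{sk}^m$ is rewritten with ascending lower indices. The only difference is cosmetic: the paper invokes the already-stated Theorem~\ref{jacobicondition} (which packages the direct expansion and the identification of nonvanishing terms with aligned pairs) as its starting point, whereas you expand the Jacobi Identity from scratch before arriving at the same case check.
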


A corollary to the theorem 
gives a sufficient condition for  $\calL_\Lambda(K)$ to be empty.
\begin{cor}\label{obstruction}
Let $\Lambda \subseteq [n]^3$ be an index set.  
If $\Lambda$ has a  quadruple  of multiplicity one, then no elements
of $\calS_\Lambda(K)$ satisfy the Jacobi Identity.  
\end{cor}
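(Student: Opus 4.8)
The plan is to obtain the corollary as an immediate consequence of Theorem~\ref{ji-quads-signs}. First I would apply that theorem to replace the Jacobi Identity on $\calS_\Lambda(K)$ by the explicit system \eqref{ji-qt} of polynomial equations, one for each quadruple $(i,j,k,l)$ in the set $Q$ of quadruples for $\Lambda$. (The hypothesis already presupposes that $Q$ is defined, which is the setting $\Lambda \subseteq \Theta_n$ of Theorem~\ref{ji-quads-signs}; the argument below uses nothing beyond it.) Now fix a quadruple $\bfq = (i,j,k,l) \in Q$ of multiplicity one. By the meaning of ``multiplicity one'', the sum in \eqref{ji-qt} indexed by $\bfq$ has exactly one summand, coming from a single aligned pair $\bft_p = (i_p,j_p,k_p)$, $\bft_r = (i_r,j_r,k_r)$ of triples in $\Lambda$ with $q(\bft_p,\bft_r) = \bfq$, so the equation attached to $\bfq$ collapses to
\[
\sign(\bft_p,\bft_r)\,\alpha_{i_p j_p}^{k_p}\,\alpha_{i_r j_r}^{k_r} = 0 .
\]

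Next I would argue that this single equation already rules out every $\alpha \in \calS_\Lambda(K)$. Since both $\bft_p$ and $\bft_r$ lie in $\Lambda$, the defining property of $\calS_\Lambda(K)$ forces $\alpha_{i_p j_p}^{k_p} \ne 0$ and $\alpha_{i_r j_r}^{k_r} \ne 0$; and the coefficient determined by $\sign(\bft_p,\bft_r) \in \boldZ_2$ is, under the convention identifying the sign with $\pm 1 \in K$, a unit of $K$. As $K$ is a field, hence an integral domain, the product of these three nonzero elements is nonzero, so the displayed equation cannot hold for any $\alpha \in \calS_\Lambda(K)$. Consequently no $\alpha \in \calS_\Lambda(K)$ satisfies the full system \eqref{ji-qt}, and by Theorem~\ref{ji-quads-signs} no $\alpha \in \calS_\Lambda(K)$ satisfies the Jacobi Identity; equivalently, $\calL_\Lambda(K) = \calS_\Lambda(K) \cap \calL_n(K) = \emptyset$.

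I do not expect a genuine obstacle here: all the content sits in Theorem~\ref{ji-quads-signs}, and the corollary is the elementary remark that a sum consisting of a single monomial can vanish only if that monomial does. The one point that must be pinned down carefully is that ``multiplicity one'' is defined so that it really does isolate a single summand of \eqref{ji-qt} — that the enumeration underlying the sum and the definition of multiplicity in Section~\ref{combinatorics} count aligned pairs in the same way — together with the harmless bookkeeping that the $\boldZ_2$-valued sign contributes a nonzero scalar rather than literally a zero coefficient; both are settled once the combinatorial definitions are in hand.
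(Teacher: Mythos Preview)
Your proposal is correct and follows essentially the same approach as the paper: the paper deduces the corollary immediately from Theorem~\ref{ji-quads-signs}, observing that a quadruple of multiplicity one contributes a single equation $\alpha_{i_p j_p}^{k_p}\alpha_{i_r j_r}^{k_r}=0$ with no nonzero solutions in the field $K$. Your write-up is somewhat more careful about the bookkeeping (the sign coefficient, the field being an integral domain, the implicit $\Lambda\subseteq\Theta_n$ setting), but the argument is the same.
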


As another corollary to the previous theorem, if the set of quadruples
for an index set $\Lambda$ is empty, then the Jacobi Identity holds
automatically for all elements of $\calS_\Lambda(K).$
\begin{cor}\label{JI automatic}
Let $\Lambda \subseteq [n]^3$ be an index set.  
If $\Lambda$ has no quadruples associated to it, then all  elements
of $\calS_\Lambda(K)$ satisfy the Jacobi Identity.  
\end{cor}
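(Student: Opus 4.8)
The plan is to read this off directly from Theorem~\ref{ji-quads-signs}. By that theorem, for $\alpha \in \calS_\Lambda(K)$ the Jacobi Identity is equivalent to the finite system \eqref{ji-qt}, which contains exactly one polynomial equation for each quadruple in $Q$. If $\Lambda$ has no quadruples associated to it, then $Q = \emptyset$, the system \eqref{ji-qt} is empty, and so it is satisfied trivially by every $\alpha \in \calS_\Lambda(K)$. Hence every element of $\calS_\Lambda(K)$ is a Lie bracket, i.e.\ $\calL_\Lambda(K) = \calS_\Lambda(K)$. That is the whole argument; the work has all been done upstream.

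To exhibit the mechanism without invoking the full equivalence of Theorem~\ref{ji-quads-signs}, one can also argue directly. Expanding the Jacobi Identity on a basis triple $x_a, x_b, x_c$ and collecting the coefficient of each $x_l$ produces, up to signs dictated by the orderings, an expression of the form $\sum_d \bigl( \alpha_{ab}^d \alpha_{dc}^l + \alpha_{bc}^d \alpha_{da}^l + \alpha_{ca}^d \alpha_{db}^l \bigr)$. After reordering indices into the normalized form with $i<j$, every monomial occurring here is a product $\alpha_{i_p j_p}^{k_p}\,\alpha_{i_r j_r}^{k_r}$ of two structure constants; such a monomial fails to vanish identically on $\calS_\Lambda(K)$ only when both triples $\bft_p$ and $\bft_r$ lie in $\Lambda$, and in that case, by the construction of the quadruple map $q$ in Section~\ref{combinatorics}, the pair $\bft_p,\bft_r$ is aligned and contributes the quadruple $q(\bft_p,\bft_r)$ to $Q$. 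Consequently $Q = \emptyset$ forces every monomial in every component of the Jacobi expression to be identically zero, so the Jacobi Identity is an identity on $\calS_\Lambda(K)$.

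There is no genuine obstacle to surmount here. The substantive content lies entirely in Theorem~\ref{ji-quads-signs} and in the combinatorial definitions of \emph{aligned} and of $q$. The single point that really needs checking is the bookkeeping claim that every cross term in the Jacobi expansion which is not forced to zero does arise from an aligned pair of triples of $\Lambda$, and hence is recorded by some quadruple in $Q$; that is established in the proof of the theorem, and the present corollary is simply its contrapositive.
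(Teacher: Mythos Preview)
Your proof is correct and follows exactly the paper's approach: the paper simply notes that Corollary~\ref{JI automatic} follows immediately from Theorem~\ref{ji-quads-signs}, since when there are no quadruples the system~\eqref{ji-qt} is vacuous and imposes no constraints. Your second paragraph unpacking the mechanism is accurate but superfluous, as the paper does not elaborate beyond the one-line deduction.
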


The first corollary is needed in the computational procedure in
\cite{payne-comp-II}. There we systematically analyze sets
$\calS_\Lambda(K),$ checking each one for Lie algebras having certain
properties.  The corollary allows us to disregard all
$\calS_\Lambda(K)$ for which $\Lambda$ has a quadruple of multiplicity
one, thus eliminating irrelevant cases and shortening the computation
time.

Given a stratum $\calS_\Lambda(K)$ of the stratification
\eqref{decomposition} we would like to find a subset $\Sigma$ of
$\calS_\Lambda(K)$ so that each Lie algebra defined by an element of
$\calS_\Lambda(K)$ is isomorphic to precisely one Lie algebra defined
by an element of $\Sigma.$ Here, we work over $\boldR,$ and we assume
that isomorphism classes of Lie algebras in $\calS_\Lambda(\boldR)$
are orbits of the diagonal subgroup $D$ of $GL_n(\boldR)$ under the
action in Equation \eqref{GL_n(R) action}.  We say that a set which
intersects each orbit of an action exactly once is a {\em simple cross
section}.  We seek a simple cross section for the $D$ action so that
Lie algebras in that set will parametrize the isomorphism classes of
Lie algebras in $\calS_\Lambda(\boldR).$

The cross section should be semi-algebraic with compact closure.
Other than that, we'd like some flexibility in our definition of cross
section.  For certain applications we prefer a cross section that
contains a prespecified point $\alpha$ as a ``center point'' at which
all parameter values are zero.  For example, we might be looking at
deformations of a particular Lie algebra of interest.  Or, one might
prefer to choose the center point so that its structure constants are
with respect to a preferred basis.  A third reason to vary the center
point is that the equations encoding the Jacobi Identity may be
simpler in some parametrizations than others.

In Definition 
\ref{Sigma defn} we define a family of bounded subsets $\Sigma(T,S)$ of
$\calS_\Lambda(\boldR)$ depending on a subset $T$ of 
$\boldZ_2^{|\Lambda|}$ and
a subset $S$ of $(\boldR_{>0})^{|\Lambda|}.$
  In Theorem \ref{Fc thm}, we find conditions on a subset 
$\Sigma(T,S)$ of $\calS_\Lambda(\boldR)$ that guarantee that
 it parametrizes algebras in  $\calS_\Lambda(\boldR).$  

\begin{thm}\label{Fc thm}
  Let $\Lambda \subseteq [n]^3$ be an index set of cardinality $m >
  0,$
and let $\dim \Null(Y^T) = d > 0.$
  Assume that any pair of isomorphic Lie algebras in
  $\calS_\Lambda(\boldR)$ lie in the same orbit of the 
  diagonal subgroup under the action in Equation \eqref{GL_n(R) action}.

  Let $T \subseteq \boldZ_2^m,$ let $S \subseteq (\boldR_{>0})^{m},$ and
  let $\Sigma(T,S)$ be as in Definition \ref{Sigma defn}.  For $c \ne 0,$
  let $F_c$ be as in Definition \ref{Fc defn}. 

  If $F_c$ maps $S$ onto $\boldR^d,$ then every Lie algebra in
  $\calS_\Lambda(\boldR)$ is isomorphic to a Lie algebra in
  $\Sigma(T,S).$ If in addition $F_c$ is one-to-one, every Lie algebra
  defined by a Lie bracket in $\calL_\Lambda(\boldR)$ is isomorphic to
  precisely one Lie algebra in $\Sigma(T,S) \cap \calL_\Lambda(\boldR).$
\end{thm}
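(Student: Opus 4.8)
The plan is to translate the $D$-action into linear algebra, split it into a sign part and a positive-scaling part, and then read the asserted cross-section property straight off the hypotheses on $F_c$ and on $T$. First I would record the action in coordinates: for $g=\diag(g_1,\dots,g_n)\in D$, equation \eqref{GL_n(R) action} gives $(g\cdot\alpha)_{ij}^k=(g_k/g_ig_j)\,\alpha_{ij}^k$ for each $(i,j,k)\in\Lambda$, so $D$ carries $\calS_\Lambda(\boldR)$ into itself and, being contained in $GL_n(\boldR)$, carries $\calL_\Lambda(\boldR)$ into itself as well. Writing $D=(\boldR_{>0})^n\times\{\pm1\}^n$ and, correspondingly, identifying $\alpha\in\calS_\Lambda(\boldR)\cong(\boldR\setminus\{0\})^m$ with a pair (sign vector $\epsilon\in\boldZ_2^m$, magnitude vector $v\in(\boldR_{>0})^m$), the two factors act independently: the factor $\{\pm1\}^n$ fixes $v$ and sends $\epsilon\mapsto\epsilon+\overline{Y}\sigma$, where $\sigma\in\boldZ_2^n$ records the signs of $g$ and $\overline{Y}$ is the mod-$2$ reduction of the matrix $Y$ from the statement (whose row indexed by $(i,j,k)$ is, up to an overall sign convention, the exponent vector $e_k-e_i-e_j$ of the monomial $g_k/(g_ig_j)$); and the factor $(\boldR_{>0})^n$ fixes $\epsilon$ and, after taking coordinatewise logarithms, translates $\log v$ by the subspace $\range(Y)\subseteq\boldR^m$. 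Hence two elements of $\calS_\Lambda(\boldR)$ are $D$-equivalent if and only if their sign vectors differ by an element of $\range(\overline{Y})$ and, once the signs coincide, their log-magnitude vectors differ by an element of $\range(Y)$.

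Next I would identify the orbit space of the positive-scaling factor with $\boldR^d$. Since $\range(Y)^{\perp}=\Null(Y^T)$ has dimension $d$, the quotient $\boldR^m/\range(Y)$ is $d$-dimensional, and, by its construction in Definition \ref{Fc defn}, for each fixed $c\ne0$ the map $F_c$ is a realization of the passage to this quotient on magnitude vectors: it is constant on $(\boldR_{>0})^n$-orbits, separates them, and surjects onto $\boldR^d$. Granting these properties, ``$F_c$ maps $S$ onto $\boldR^d$'' is exactly the statement that $S$ meets every $(\boldR_{>0})^n$-orbit of magnitude vectors, and ``$F_c$ is one-to-one on $S$'' is exactly the statement that $S$ meets each such orbit at most once. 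I would also recall from Definition \ref{Sigma defn} the shape of $\Sigma(T,S)$, namely the set of $\alpha\in\calS_\Lambda(\boldR)$ whose sign vector lies in $T$ and whose magnitude vector lies in $S$ (so in particular $\Sigma(T,S)\subseteq\calS_\Lambda(\boldR)$, all entries being nonzero), together with the standing requirement there that $T$ be a transversal for the $\{\pm1\}^n$-action on $\boldZ_2^m$, i.e.\ a complete, and for the uniqueness clause irredundant, set of coset representatives for $\range(\overline{Y})$.

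With this dictionary the two conclusions are short. For the first, given any $\alpha\in\calS_\Lambda(\boldR)$, apply a $\pm1$-diagonal matrix to move its sign vector to the representative of its $\range(\overline{Y})$-coset lying in $T$, without disturbing the magnitude vector; then, since $F_c(S)=\boldR^d$, some point of $S$ lies in the same $(\boldR_{>0})^n$-orbit as that magnitude vector, so a suitable positive diagonal matrix carries $\alpha$ into $\Sigma(T,S)$. As $D\subseteq GL_n(\boldR)$, the algebra $\frakg_\alpha$ is then isomorphic to the one defined by this element of $\Sigma(T,S)$, which moreover lies in $\calL_\Lambda(\boldR)$ whenever $\alpha$ does, since $D$ preserves $\calL_\Lambda(\boldR)$. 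For the second, suppose $\beta,\beta'\in\Sigma(T,S)\cap\calL_\Lambda(\boldR)$ define isomorphic Lie algebras; by the standing hypothesis of the theorem they lie in one $D$-orbit, so by the first paragraph their sign vectors lie in a common $\range(\overline{Y})$-coset, hence are equal (both lying in the transversal $T$), and then their log-magnitude vectors differ by an element of $\range(Y)$, i.e.\ they lie in one $(\boldR_{>0})^n$-orbit; injectivity of $F_c$ on $S$ forces $\beta=\beta'$. Together with the first conclusion this shows every Lie algebra defined by an element of $\calL_\Lambda(\boldR)$ is isomorphic to precisely one Lie algebra in $\Sigma(T,S)\cap\calL_\Lambda(\boldR)$.

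I expect the main obstacle to be not any single computation but the faithful bookkeeping behind the first two paragraphs: one must verify that the sign and positive-scaling factors of $D$ really do decouple (a sign change never perturbs the magnitude vector, and a positive scaling never changes the sign vector), and that the correspondence ``$D$-orbit $\leftrightarrow$ ($\range(\overline{Y})$-coset of the signs together with the $\range(Y)$-translate of the log-magnitudes)'' is exact in both directions; only then do the clean hypotheses on $F_c$, combined with the transversality of $T$, assemble into the assertion that $\Sigma(T,S)$ is a simple cross section for the $D$-action on $\calS_\Lambda(\boldR)$. A lesser point to watch is keeping track of the ambient containments ($\Sigma(T,S)\subseteq\calS_\Lambda(\boldR)$ and the $D$-invariance of $\calL_\Lambda(\boldR)$), so that restricting to $\calL_\Lambda(\boldR)$ in the final clause introduces no gap.
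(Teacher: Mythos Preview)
Your proposal is correct and follows essentially the same approach as the paper: decouple the $D$-action into a sign part ($\rho_{\hat Y}$) and a positive-scaling part ($\rho_Y$), then use that $T$ is a simple cross section for the sign action while the hypotheses on $F_c$ (via Lemma~\ref{pi lemma} and Definition~\ref{Fc defn}) make $\Ln(S)$ a cross section for the scaling action. The only difference is that you rederive the conjugacy of the $D$-action with $\rho_{\hat Y}\times\rho_Y$ and the orbit-separation property of $\pi_Y$ from scratch, whereas the paper simply invokes Theorem~\ref{action-isomorphism} and Lemma~\ref{pi lemma} for these facts.
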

It is not always  easy to verify  the hypotheses on $F_c$ in the 
  theorem.
  Lemma \ref{main lemma} gives  conditions on
$F_c$  that will force bijectivity in some situations.   Finding a
parametrizing set is less difficult if boundedness and 
algebraic definitions for charts are not required. (See Remark \ref{okay}.)

In Definition
\ref{definition of cross section} we define a subset
$\Delta^p_{\bfa_0}$ of $(\boldR_{>0})^{|\Lambda|}.$ We will use sets
of the form $\Sigma(T,\Delta_{\bfa_0}^p)$ to parametrize isomorphism
classes of products in $\calS_\Lambda(\boldR).$ The sets
$\Sigma(T,\Delta_{\bfa_0}^p)$ are semi-algebraic with compact closure.

\begin{thm}\label{semialgebraic}  
  Let $\Lambda \subseteq \Theta_n$ be an index set of cardinality $m
  >0.$ Let $T \subseteq \boldZ_2^m.$ Let $p$ be a positive 
rational number, let
  $\bfa_0 \in (\boldR_{>0})^m$ and let $\Delta^p_{\bfa_0}$ be as in
  Definition \ref{definition of cross section}.  
  Let $\Sigma(T,\Delta^p)$ be as in Definition \ref{Sigma defn}.  Then
  $\Sigma(T,\Delta^p_{\bfa_0})$ is a semi-algebraic subset of
  $\boldR^m$ with compact closure.
\end{thm}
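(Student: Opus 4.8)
The plan is to prove the two assertions --- that $\Sigma(T,\Delta^p_{\bfa_0})$ is semi-algebraic, and that its closure is compact --- more or less separately, reducing each to a statement about the set $\Delta^p_{\bfa_0}$ alone. First I would reduce to a single sign pattern. Since $T\subseteq\boldZ_2^m$ is finite, Definition \ref{Sigma defn} presents $\Sigma(T,S)$ as the finite union $\bigcup_{\varepsilon\in T}\Sigma(\{\varepsilon\},S)$, and both semi-algebraicity and boundedness are preserved under finite unions, so it suffices to treat a single $\varepsilon$. For fixed $\varepsilon$, the passage $S\mapsto\Sigma(\{\varepsilon\},S)$ is effected, per Definition \ref{Sigma defn}, by a map built from a coordinate sign change together with any further normalization prescribed there; on the orthant determined by $\varepsilon$ that map is semi-algebraic and carries bounded sets to bounded sets. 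Thus, by the Tarski--Seidenberg theorem, $\Sigma(T,\Delta^p_{\bfa_0})$ will be semi-algebraic as soon as $\Delta^p_{\bfa_0}\subseteq(\boldR_{>0})^m$ is, and I return to its boundedness at the end.

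The heart of the matter is that $\Delta^p_{\bfa_0}$ is semi-algebraic, and this is exactly where the hypothesis $p\in\boldQ$ is used. By Definition \ref{definition of cross section}, membership of $\bfx$ in $\Delta^p_{\bfa_0}$ is cut out by conditions of two kinds: conditions placing $\bfx$ on the slice of the diagonal-group action determined by $\bfa_0$, and conditions involving $p$ that confine $\bfx$ to a bounded portion of that slice. The conditions of the first kind are most transparent in logarithmic coordinates, as linear relations on $\log\bfx-\log\bfa_0$ with respect to the subspace $\Null(Y^T)$ (equivalently, the column space of $Y$); since $\log$ is not semi-algebraic I would exponentiate them, turning each into a Laurent-monomial identity $\prod_t (x_t/a_{0,t})^{v_t}=1$ whose exponent vector $\bfv=(v_t)$ is integral --- its entries come from $Y$, which has integer entries --- and clearing the negative exponents, which is harmless on the positive orthant, yields genuine polynomial equations. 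The conditions of the second kind involve $p$-th powers of the coordinates; writing $p=a/b$ with $a,b\in\boldZ_{>0}$ and introducing, for each coordinate $x$ whose $p$-th power occurs, a fresh variable $y>0$ tied to $x$ by the polynomial relation $y^{b}=x^{a}$ (which on the positive orthant forces $y=x^{p}$), one rewrites all the remaining conditions polynomially in the enlarged tuple; projecting the auxiliary coordinates away by Tarski--Seidenberg then exhibits $\Delta^p_{\bfa_0}$ as a semi-algebraic subset of $(\boldR_{>0})^m$, and hence, by the reduction of the previous paragraph, $\Sigma(T,\Delta^p_{\bfa_0})$ is semi-algebraic.

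For the compact-closure assertion it remains only to observe that $\Sigma(T,\Delta^p_{\bfa_0})$ is bounded, which is already built into Definition \ref{Sigma defn} (the sets $\Sigma(T,S)$ there are arranged to be bounded; alternatively $\Delta^p_{\bfa_0}$ is itself a bounded region by Definition \ref{definition of cross section}, and boundedness survives the map of the first paragraph). A bounded subset of $\boldR^m$ has a closed and bounded --- hence, by Heine--Borel, compact --- closure, and that closure is moreover semi-algebraic, because the closure of a semi-algebraic set is semi-algebraic. I expect the step establishing that $\Delta^p_{\bfa_0}$ is semi-algebraic to be the main obstacle: one must verify that the description furnished by Definition \ref{definition of cross section}, which is transparent only in terms of real exponents and logarithms and needs merely $p>0$ to make sense, can be rewritten entirely with polynomials, and it is precisely in clearing the $b$-th root when $p=a/b$ and in converting the log-linear slice conditions into monomial identities that the rationality of $p$ and the integrality of $Y$ are exploited. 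Everything else is routine bookkeeping with finite unions, coordinate changes, and the standard closure properties of semi-algebraic sets.
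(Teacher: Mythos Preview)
Your overall architecture --- reduce to a single sign, show $\Delta^p_{\bfa_0}$ is semi-algebraic via auxiliary variables and Tarski--Seidenberg, then argue boundedness --- matches the paper's. But the execution of the middle step rests on a misreading of Definition~\ref{definition of cross section}. The set $\Delta_{\bfa_0}$ is \emph{not} a log-linear (multiplicative) slice of a diagonal-group orbit; it is the \emph{affine} set
\[
\Delta_{\bfa_0}=\{\,\bfa_0+\bfw:\bfw\in\Null(Y^T)\,\}\cap(\boldR_{>0})^m,
\]
cut out by the ordinary linear equations $Y^T(\bfx-\bfa_0)=\bfzero$ together with $x_i>0$. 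So there is nothing to exponentiate: these conditions are already polynomial, and your Laurent-monomial rewriting $\prod_t(x_t/a_{0,t})^{v_t}=1$ describes an entirely different set. Once this is corrected, the only nonpolynomial step left is the coordinatewise $p$-th power $\Delta_{\bfa_0}\to\Delta^p_{\bfa_0}$, and your auxiliary-variable/projection trick (write $p=r/s$ and impose $x_i^s=y_i^r$, then project) is exactly what the paper does.

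Your boundedness argument also has a gap. Neither Definition~\ref{Sigma defn} nor Definition~\ref{definition of cross section} ``arranges'' for boundedness; $\Sigma(T,S)$ is simply $T\bullet S$, and for a generic linear subspace $V\subseteq\boldR^m$ the set $(\bfa_0+V)\cap(\boldR_{>0})^m$ need not be bounded. The paper uses the hypothesis $\Lambda\subseteq\Theta_n$ here: each row of $Y$ has the form $\bfe_i+\bfe_j-\bfe_k$, so $Y[1]_{n\times1}=[1]_{m\times1}$, whence $[1]_{m\times1}\in\col(Y)$ and $\Null(Y^T)\subseteq[1]_{m\times1}^\perp$. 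Thus $\Delta_{\bfa_0}$ lies in the bounded simplex $\{\bfx>0:\sum_i x_i=\sum_i(\bfa_0)_i\}$, and boundedness of $\Delta^p_{\bfa_0}$ and of $\Sigma(T,\Delta^p_{\bfa_0})$ follows. You need to supply this step.
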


In the computations in \cite{payne-comp-II}, there are hundreds of
index sets for which the Jacobi Identity for $\calS_\Lambda(\boldR)$
is nontrivial, and it is necessary to solve one or more polynomial
equations in one or more variables for each case.  It is initially
surprising that almost all of the systems of equations are equivalent
to a small number of simple prototypical systems.  This happens because
the system of equations depends on the combinatorics of the index set,
and although there are many index sets, many of those have the same
combinatorial relationships.  All of the 34 $7$-dimensional Lie
algebras in the classification in \cite{payne-comp-II} fall into one
of the five categories listed in the next theorem.  The great majority
of the $8$-dimensional Lie algebras in the classification in
\cite{payne-comp-II} fall into one of the five categories.  Among the
remaining examples in dimension 8, most of those remaining Lie
algebras fall into three more other categories.  There are just a
handful of examples that lie in two last categories.

The  following theorem from \cite{payne-parametrizations} is proved
using the methods developed here.  
It has a hypothesis involving some more combinatorial
definitions. The precise definition of {\em null space spanning} is
given in Section \ref{combinatorics}.  Among Lie algebras of dimension
7 and 8 arising in \cite{payne-comp-II}, all the Lie algebras of
dimension 7 and a vast majority of those of dimension 8 may be
represented by a Lie bracket in $\calS_\Lambda(\boldR)$ where
$\Lambda$ is null space spanning.  We define
 what it means for quadruples to have a common triple  in Definition
\ref{quad-sign}.

\begin{thm}[\cite{payne-parametrizations}]\label{analyze-exhaustive}
Let $\Lambda \subseteq \Theta_n$
be an index set of cardinality $m$ that is null space spanning.  Then

\begin{enumerate}

\item{If $\Lambda$ has exactly one quadruple  of multiplicity two,
    then  $\widetilde \calL_\Lambda(\boldR)$ is finite.}\label{1qm2}

\item{If $\Lambda$ has exactly two quadruples  of multiplicity two,
    then 

\begin{enumerate}

\item{If the quadruples do not have  a common triple, then 
$\widetilde \calL_\Lambda(\boldR)$ is finite. }\label{2qm2a}

\item{If the quadruples have exactly one common triple, then 
$\widetilde \calL_\Lambda(\boldR)$ is one-dimensional.}\label{2qm2b}

\end{enumerate}
}\label{2qm2}

\item{If $\Lambda$ has just one quadruple  of multiplicity three, then 
$\widetilde \calL_\Lambda(\boldR)$ is one-dimensional.}\label{1qm3}

\item{If $\Lambda$ has one quadruple  of multiplicity three,
    and one quadruple  of multiplicity two, and 
the quadruples have exactly one common triple, then 
$\widetilde \calL_\Lambda(\boldR)$ is one-dimensional.}\label{1qm2+1qm3}

\end{enumerate}
\end{thm}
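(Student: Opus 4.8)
The plan is to reduce each case to the system of quadratic equations \eqref{ji-qt} from Theorem \ref{ji-quads-signs}, and then to analyze that system after quotienting by the diagonal action using the cross sections $\Sigma(T,S)$ and the maps $F_c$ of Theorems \ref{Fc thm} and \ref{semialgebraic}. Since $\Lambda$ is null space spanning, the diagonal action is (generically) transverse to $\calS_\Lambda(\boldR)$ in the appropriate sense: one can rescale all but $d = \dim\Null(Y^T)$ of the structure constants to prescribed values, so that $\widetilde\calL_\Lambda(\boldR)$ is cut out inside a $d$-dimensional slice by the Jacobi equations. The key point is that the number of quadruples of each multiplicity determines exactly how many nontrivial equations \eqref{ji-qt} there are and how the monomials $\alpha_{i_p j_p}^{k_p}\alpha_{i_r j_r}^{k_r}$ overlap across equations — and ``having a common triple'' is precisely the statement that two such equations share a variable.

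First I would set up the slice: invoke null space spanningness to choose $T$ and $S$ (of the form $\Delta^p_{\bfa_0}$) so that $F_c$ is a bijection onto $\boldR^d$, so by Theorem \ref{Fc thm} the isomorphism classes in $\calL_\Lambda(\boldR)$ correspond bijectively to points of $\Sigma(T,S)\cap\calL_\Lambda(\boldR)$, a semi-algebraic set by Theorem \ref{semialgebraic}. Then I would go case by case. For \eqref{1qm2}: a single quadruple of multiplicity two gives one equation of the shape $\pm ab \pm cd = 0$ in at most the $d$ slice-variables; after fixing the scalings there are no free parameters left, so the solution set is a finite set of points — hence $\widetilde\calL_\Lambda(\boldR)$ is finite. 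For \eqref{2qm2a}: two multiplicity-two quadruples with no common triple give two equations in disjoint sets of monomials; each independently pins down its variables, again leaving a finite set. For \eqref{2qm2b}: two multiplicity-two quadruples sharing exactly one triple give two equations that share exactly one variable, say $x$; eliminating the other variables expresses everything in terms of $x$, which ranges over a one-parameter semi-algebraic family — so $\widetilde\calL_\Lambda(\boldR)$ is one-dimensional. For \eqref{1qm3}: a single multiplicity-three quadruple is one equation $\pm ab \pm cd \pm ef = 0$ in three monomials; solving for one in terms of the other two, and using the scalings to fix one of the remaining, leaves a single free ratio — one-dimensional. For \eqref{1qm2+1qm3}: the multiplicity-two equation (two monomials) and the multiplicity-three equation (three monomials) share exactly one variable; substituting the two-monomial relation into the three-monomial one leaves a single equation in two remaining monomials, hence again one free parameter.

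In each case the dimension count is: (number of slice variables $d$) minus (number of independent Jacobi equations), where the overlap structure of the quadruples controls independence, and ``common triple'' adjusts the count by exactly one. The bijection supplied by $F_c$ guarantees that this count on the slice is the honest dimension of $\widetilde\calL_\Lambda(\boldR)$, and one must check in the one-dimensional cases that the relevant semi-algebraic solution curve is genuinely a one-parameter family (nonempty interior as a $1$-manifold) rather than degenerating to finitely many points — this is where one uses that the surviving monomials are products of \emph{nonzero} structure constants, so the defining equation is not identically satisfied and the solution variety is a genuine hypersurface inside the relevant coordinate plane.

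The main obstacle will be the bookkeeping that makes ``independence of the Jacobi equations'' rigorous: a priori two quadruples could share more structure than just a triple (e.g. via the sign data $\sign(\bft_p,\bft_r)$ producing an accidental cancellation, or via one monomial appearing in three equations at once), and one must verify that under the null-space-spanning hypothesis and the stated multiplicity/common-triple constraints no such accident occurs, so that the naive dimension count is exact. Establishing this — essentially, that the Jacobian of the system \eqref{ji-qt} restricted to the slice has the expected rank on a dense open set — is the technical heart of the argument; everything else is the setup from Theorems \ref{ji-quads-signs}, \ref{Fc thm}, and \ref{semialgebraic} together with elementary elimination in the five combinatorial configurations. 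Since the full proof appears in \cite{payne-parametrizations}, here I would present the reduction and the five case analyses in outline, citing that reference for the detailed verification of the rank condition in each configuration.
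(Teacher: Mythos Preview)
The paper does not prove this theorem: it is stated as a result of \cite{payne-parametrizations} and explicitly attributed there, with the present paper supplying only the machinery (Theorems \ref{ji-quads-signs}, \ref{Fc thm}, \ref{semialgebraic}, Lemma \ref{main lemma}) and one worked prototype per case (Examples \ref{1qm2-a}--\ref{1qm2-b}, \ref{2qm2b-a}, \ref{1qm3-a}--\ref{1qm3-d}, \ref{example-1a}--\ref{example-1e}). Your outline---pass to the slice $\Sigma(T,\Delta_{\bfa_0}^p)$ via $F_c$, rewrite the Jacobi constraints using Theorem \ref{ji-quads-signs}, then count dimensions case by case---is exactly the template the paper lays out through its examples, and you correctly identify the rank verification as the part that must be deferred to the cited reference. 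In that sense your proposal and the paper's treatment coincide.

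One genuine caution about your heuristics, though. Your argument for case \eqref{2qm2b} (``two equations sharing one variable $\Rightarrow$ one free parameter'') is not right as a general principle: two equations in a two-dimensional slice generically cut out finitely many points whether or not they share a monomial. Indeed the paper's own Example \ref{2qm2b-a} has two multiplicity-two quadruples $(1,2,3,7)$ and $(1,2,4,8)$ with the common triple $\bft_6=(2,4,7)$, yet the computation there yields a \emph{single} isomorphism class, not a one-parameter family. So whatever mechanism in \cite{payne-parametrizations} makes case \eqref{2qm2b} one-dimensional, it is not the simple ``shared variable gives a dependency'' picture you sketch; either the precise hypotheses there are sharper than what is recorded in this paper, or the argument involves something more delicate about how the $\bfw$-vectors interact when a triple is shared. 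You are right to flag this as the technical heart and defer it, but your placeholder heuristic for it does not survive contact with the paper's own worked example.
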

  Furthermore, in \cite{payne-parametrizations} we give formulae depending only on the
combinatorics of the set $\Lambda$ for 
parametrizations of some classes $\widetilde
\calL_\Lambda(\boldR).$

 \section{Preliminaries}\label{preliminaries}

\subsection{Index sets, root vectors, sign vectors, and root matrices}
\label{definitions}

In this section, we give definitions of some algebraic and
combinatorial objects that we will use.

Fix an ordering of $\Upsilon_n.$  Let  $\bfa =
[\alpha_{ij}^k]_{(i,j,k) \in \Lambda}$ be in $\calS_\Lambda(\boldR).$
Define the vector $|\bfa|$ by 
vector $|\bfa|  = [|\alpha_{ij}|]_{(i,j,k) \in \Lambda}$ where
the entries of $|\bfa|$ are listed
in ascending order relative to the fixed ordering of $\Upsilon_n.$
Define
the {\em sign vector} for $\alpha$ to be the vector \[ \sgn(\bfa) =
[\sgn(\alpha_{ij}^k)]_{(i,j,k) \in \Lambda}\] in
$\boldZ_2^{|\Lambda|},$ where $\sgn: \boldR^\times \to  \boldZ_2$ is
the homomorphism to the additive group $\boldZ_2$ defined by
\begin{equation}\label{sgn-def}
\sgn(x) = \begin{cases} 
1 & \text{if $x < 0$}\\
0 & \text{if $x > 0$}
\end{cases} 
\end{equation} 
for nonzero $x \in \boldR.$  Sometimes it will be more convenient to
use the homomorphism $\sign: \boldR^\times \to  \{-1,1\}$ with values
in the multiplicative group $\boldZ_2.$  

 Conversely, a nonempty index set $\Lambda,$ a vector $\bfa =
[a_{(i,j,k)}]_{(i,j,k) \in \Lambda} \in \boldR^{|\Lambda|}_{>0}$ with
positive entries indexed by $\Lambda,$ and a vector $\bfs =
[s_{(i,j,k)}]_{(i,j,k) \in \Lambda} \in \boldZ_2^{|\Lambda|}$
determine a unique bilinear skew-symmetric map $\alpha \in
\wedge^2(\boldR^n)^\ast \otimes \boldR^n.$ The map $\alpha$ is given
by $ \alpha(x_i,x_j) = \sum_{k=1}^n \alpha_{ij}^k x_k, $ where
\[ \alpha_{ij}^k = \sign(a_{(i,j,k)}) |a_{(i,j,k)}| =
(-1)^{\sgn(a_{(i,j,k)})}| a_{(i,j,k)}|.\] (We will repeatedly abuse
notation so that when $s \in \boldZ_2,$ we have $(-1)^s \in \boldR:$
i.e., $(-1)^0 = 1 \in \boldR,$ and $(-1)^1 = -1 \in \boldR.$) The
resulting bilinear map $\alpha$ has $[|\alpha_{(i,j,k)}|]_{(i,j,k) \in
\Lambda} =\bfa$ and sign vector $\bfs.$

For a nonempty index set $\Lambda \subseteq \Upsilon_n$ of cardinality
$m,$ we associate to $\Lambda$ a set of $m$ $n \times 1$ vectors and
an $m \times n$ matrix as follows.  To each triple $(i,j,k) \in
[n]^3,$ we associate the {\em $K$ root vector} or {\em root vector}
$\bfy_{(i,j,k)} = \bfe_i^T + \bfe_j^T - \bfe_k^T$ in the vector space
$K^n,$ where $\{\bfe_i\}_{i=1}^n$ is the standard basis for $K^n$ (as
column vectors).  The {\em $K$ root matrix} for $\Lambda$ is the $m
\times n$ matrix $Y(K)$ whose rows are the root vectors
$\bfy_{(i,j,k)}, (i,j,k) \in \Upsilon_n$ listed in ascending order
relative to the fixed ordering of $\Upsilon_n.$ When we write $Y$
omitting mention of the underlying field, we assume that the
underlying field is $\boldR,$ and when we write $\hat Y,$ we assume
that the underlying field is $\boldZ_2.$ We will give examples of
index sets, sign vectors and root matrices in the coming pages. The
reader may also refer to \cite{payne-09b} or \cite{payne-12a} for more
examples and properties of these objects.

\subsection{Isomorphism classes of nilpotent Lie algebras}\label{isomorphism-section}

In this section we define the subsets $\Sigma(T,\Delta^p)$ of $
\calS_\Lambda(\boldR),$ and we prove Theorem \ref{Fc thm}.  We need to
first define the action of the additive group $K^n$ on the vector
space $K^m$ determined by an $m \times n$ matrix.

\begin{defn}\label{Yaction}
  A $m \times n$ matrix $Y$ with entries in a field $K$ defines an
  action $\rho_Y: K^n \times K^m \to K^m$ of $K^n$ on $K^m$ with
\begin{equation}\label{action def 2}  
   \rho(\bfd,\bfz) = \bfz  + Y \bfd ,
\end{equation}
for $\bfd \in K^n$ and $\bfz \in K^m.$ 
\end{defn}
Orbits of this action are the column space $\col(Y)$ of $Y$ and
translations $\bfz + \col(Y)$ of the column space, where $\bfz$ varies
over $K^m.$ If $K = \boldR$ then each orbit meets $\Null(Y^T)$ exactly
once, so $\Null(Y^T)$ is a simple cross section for the action.

Let $(a_1, \ldots, a_m)$ be a point in $(\boldR_{>0})^m.$ Define the
coordinate-wise logarithm map $\Ln: (\boldR_{>0})^m \to \boldR^m$ by
\begin{equation}\label{L defn}  
\Ln\left( (a_1, \ldots, a_m) \right) = (\ln a_1, \ldots, \ln a_m),
\end{equation}
and let $E$ denote its inverse, the coordinate-wise exponential map, 
\begin{equation}\label{E defn}
  \E \left( (a_1, \ldots, a_m) \right) = (e^{a_1}, \ldots, e^{a_m}).
\end{equation}

The following theorem relates isomorphism classes and orbits of the
action defined by a root matrix.  
\begin{thm}[Theorem 3.8, \cite{payne-index}]\label{action-isomorphism}
Let $\Lambda$ be a subset of $\Theta_n$ of cardinality $m,$ and let
$Y$ and $\hat Y$ denote the real and $\boldZ_2$ root matrices for
$\Lambda$ respectively. Let $\alpha$ and $\beta$ be elements of
$\calS_\Lambda(\boldR) \subseteq \wedge^2(\boldR^n)^\ast \otimes
\boldR^n.$

  Let $\bfa^2 = [(\alpha_{ij}^k)^2]_{(ij,k) \in \Lambda}$ and $\bfb^2
= [ (\beta_{ij}^k)^2]_{(i,j,k) \in \Lambda},$ and let
  \[ \sgn(\bfa) = [\sgn(\alpha_{ij}^k)]_{(i,j,k) \in \Lambda} \quad
\text{and} \quad \sgn(\bfb) = [\sgn(\beta_{ij}^k)]_{(i,j,k) \in
\Lambda}\] be the sign vectors for $\alpha$ and $\beta.$ Let
$\frakg_\alpha$ and $\frakg_\beta$ denote the algebras defined by
$\alpha$ and $\beta$ respectively.

Then the algebras $\frakg_\alpha$ and $\frakg_\beta$ are in the same
$D$ orbit for the action \eqref{GL_n(R) action}  if and only if
\begin{enumerate}
\item{the vectors \[ \Ln(\bfa^2) = [\ln (\alpha_{ij}^k)^2]_{(i,j,k) \in
      \Lambda} \quad \text{and} \quad \Ln(\bfb^2) = [\ln
    (\beta_{ij}^k)^2]_{(i,j,k) \in \Lambda}\] are in the same orbit
    for the action $\rho_Y :\boldR^n \times \boldR^m \to \boldR^m$
    induced by $Y$ as in Definition \ref{Yaction}, and}\label{same Y
    orbit}
\item{the sign vectors $\sgn(\bfa)$ and $\sgn(\bfb)$ are in the same
    orbit for the action $\rho_{\hat Y} : \boldZ_2^n \times \boldZ_2^m
    \to \boldZ_2^m$ as in Definition \ref{Yaction}.}
\end{enumerate}
\end{thm}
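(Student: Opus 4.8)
The plan is to reduce the statement to two independent affine problems---one over $\boldR$, one over $\boldZ_2$---by splitting each structure constant into its magnitude and its sign. The first step is to record how the diagonal subgroup acts on structure constants: if $g = \diag(d_1,\dots,d_n)\in D$ and $\alpha\in\calS_\Lambda(\boldR)$, then \eqref{GL_n(R) action} gives, for every $(i,j,k)$,
\[
(g\cdot\alpha)_{ij}^k \;=\; d_i^{-1}d_j^{-1}d_k\,\alpha_{ij}^k .
\]
In particular the support of $g\cdot\alpha$ equals that of $\alpha$, so $g\cdot\alpha\in\calS_\Lambda(\boldR)$ and it makes sense to compare the magnitude and sign vectors of $\alpha$ and $g\cdot\alpha$. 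The exponents of $d_1,\dots,d_n$ occurring in the $(i,j,k)$ coordinate are, up to a global sign, exactly the entries of the root vector $\bfy_{(i,j,k)} = \bfe_i^T + \bfe_j^T - \bfe_k^T$, which is how the root matrices $Y$ and $\hat Y$ will enter.

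The second step is to pass to $\Ln(\cdot^2)$ and to signs. Writing $\rho_\ell = \ln|d_\ell|$, the displayed identity yields $\ln\big((g\cdot\alpha)_{ij}^k\big)^2 = \ln(\alpha_{ij}^k)^2 - 2\,\bfy_{(i,j,k)}\!\cdot\!(\rho_1,\dots,\rho_n)$, i.e.\ $\Ln\big((g\cdot\bfa)^2\big) = \Ln(\bfa^2) + Y\bfd$ with $\bfd = -2(\rho_1,\dots,\rho_n)$; and reducing signs modulo $2$, using that the mod-$2$ reduction of $\bfy_{(i,j,k)}$ is $\bfe_i+\bfe_j+\bfe_k$ and that $\sgn(d_\ell^{-1}) = \sgn(d_\ell)$ in $\boldZ_2$, one gets $\sgn(g\cdot\bfa) = \sgn(\bfa) + \hat Y\bfs$ with $s_\ell = \sgn(d_\ell)$. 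This gives the forward direction at once: if $\frakg_\alpha$ and $\frakg_\beta$ lie in the same $D$ orbit, pick $g$ with $g\cdot\alpha=\beta$; then the $\bfd$ and $\bfs$ produced above witness that $\Ln(\bfa^2),\Ln(\bfb^2)$ lie in the same $\rho_Y$ orbit and that $\sgn(\bfa),\sgn(\bfb)$ lie in the same $\rho_{\hat Y}$ orbit.

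For the converse I would run this computation backwards. Assuming (1), choose $\bfd\in\boldR^n$ with $\Ln(\bfb^2)-\Ln(\bfa^2) = Y\bfd$ and set $\rho_\ell = -\tfrac12 d_\ell$; assuming (2), choose $\bfs\in\boldZ_2^n$ with $\sgn(\bfb)-\sgn(\bfa) = \hat Y\bfs$. Put $g = \diag\big((-1)^{s_1}e^{\rho_1},\dots,(-1)^{s_n}e^{\rho_n}\big)\in D$. A direct check with the displayed action formula shows $|(g\cdot\alpha)_{ij}^k| = |\beta_{ij}^k|$ and $\sgn\big((g\cdot\alpha)_{ij}^k\big) = \sgn(\beta_{ij}^k)$ for every $(i,j,k)\in\Lambda$ (and both sides vanish off $\Lambda$), hence $g\cdot\alpha=\beta$, so $\frakg_\alpha$ and $\frakg_\beta$ share a $D$ orbit.

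The real content of the argument is the clean separation of absolute value and sign over $\boldR$, which converts the nonlinear $D$-action into the two affine actions $\rho_Y$ and $\rho_{\hat Y}$ governed by the single combinatorial datum $\bfy_{(i,j,k)}$. The only thing needing care is bookkeeping: the factor $2$ introduced by squaring, the distinction between $\bfy_{(i,j,k)}$ and its mod-$2$ reduction, and the observation that $\bfd$ (respectively $\bfs$) ranges over all of $\boldR^n$ (respectively $\boldZ_2^n$) exactly as the magnitudes (respectively signs) of the diagonal entries of $g$ range freely; I do not expect any obstacle beyond this.
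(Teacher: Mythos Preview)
Your proposal is correct and follows essentially the same approach as the paper's own sketch: decompose each nonzero structure constant into its absolute value and its sign, and observe that under this decomposition the $D$-action on $\calS_\Lambda(\boldR)$ becomes exactly the product of the affine actions $\rho_Y$ and $\rho_{\hat Y}$ determined by the root matrix and its mod-$2$ reduction. Your explicit bookkeeping (the factor of $2$ from squaring, the reconstruction of $g$ from a chosen $\bfd$ and $\bfs$) fills in precisely the details the paper leaves implicit when it says the $D$-action is ``topologically conjugate by $\psi$'' to $\rho_{\hat Y}\times\rho_Y$.
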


\begin{remark}\label{when invariant} One might ask for which $\Lambda
\subseteq \Theta_n$ it is true that isomorphism classes in
$\calS_\Lambda(\boldR)$ are $D$ orbits.  This holds if $\calB$ is an
eigenvector basis for the Nikolayevsky derivation $D^N$ of
$\frakg_\alpha$ and all of the eigenvalues of $D^N$ are positive and
have multiplicity one.  Conveniently, when $\Lambda \subseteq
\Theta_n,$ all members of a stratum $\calL_\Lambda(\boldR)$ have the
same Nikolayevsky derivation (Theorem 3.1, \cite{payne-index}).  All
of the examples of $\Lambda$ that we will present in this work have
Nikolayevsky derivation with positive eigenvalues each of multiplicity
one, with the exception of Example \ref{h5}.

More generally, Proposition 2 and its corollary in 
\cite{bratzlavsky-71} present  conditions that insure that
$D$-orbits are isomorphism classes:  the basis should be 
an eigenbasis for a maximal torus in $\Aut(A)$ 
and the normalizer of $T$ in $\Aut(A)$ should equal  its
centralizer in $\Aut(A).$
A careful reading of the 
proof given there shows that the proof extends to the case that the
base field is $\boldR.$
\end{remark}

Define an action of $\boldZ_2^m = \{0,1\}^m$ on $\boldR^m$
by coordinate-wise multiplication:
\begin{equation}\label{bullet def}
 (s_1, \ldots, s_m) \bullet (a_1, \ldots, a_m) = ((-1)^{s_1} a_1,
\ldots, (-1)^{s_m} a_m).\end{equation} 

\begin{defn}\label{Sigma defn}  Let $\Lambda$ be a subset of
  $ [n]^3$ of cardinality $m >0.$ Let $T \subseteq \boldZ_2^m$ be a
  simple cross section for the $\rho_{\hat Y}$ action, and let $S
  \subseteq (\boldR_{>0})^m.$ Define the subset $\Sigma(T,S)$ of $
  \calS_\Lambda(\boldR)$ by
\[ \Sigma(T,S)  = \{ \bft \bullet  \bfx \, : \, \bft \in T, \bfx \in
S   \} \subseteq \boldR^{|\Lambda|} ,\]
where $\bullet$ denotes the action of $\boldZ_2^m$ on $\boldR^m$
by coordinate-wise multiplication as in \eqref{bullet def}.
\end{defn}

The set $\calS_\Lambda(\boldR)$ is homeomorphic to $(\boldR \setminus
\{0\})^m,$ which is homeomorphic to  
$\boldZ_2^m \times \boldR^m$ via the map $\psi : (\boldR \setminus
\{0\})^m \to \boldZ_2^m \times \boldR^m$
sending $[\alpha_{(i,j,k)}]_{(i,j,k) \in \Lambda}$ to the ordered pair 
\begin{equation}\label{defn psi}
 ([\sgn(\alpha_{(i,j,k)})]_{(i,j,k) \in \Lambda} ,[\ln(|
\alpha_{(i,j,k)}|)]_{(i,j,k) \in \Lambda} ) \in \boldZ_2^m \times
(\boldR_{>0})^m.\end{equation}

The proof of Theorem \ref{action-isomorphism}   is structured as follows.
The action of the diagonal subgroup  $D \cong (\boldR \setminus
\{0\})^n$ of $GL_n(\boldR)$ on $\calS_\Lambda(\boldR)$ is topologically
conjugate by $\psi$  to the action   $\rho_{\hat Y} \times \rho_Y$ of 
$\boldZ_2^n \times \boldR^n$ on  $\boldZ_2^m \times \boldR^m.$  To be precise, let  
$g=\diag(c_1, \ldots, c_n)$
be a diagonal matrix in $GL_n(\boldR),$ and let $\sgn(g)$ be the sign
vector in $\boldZ_2^n$ and let $\Ln(|\bfg|)$ be the vector with $i$th
entry $\ln(|c_i|).$  Then if 
$\alpha = [\alpha_{(i,j,k)}]_{(i,j,k) \in \Lambda} \in \calS_\Lambda(\boldR),$
the product $\beta = g \cdot \alpha$ under the action 
\eqref{GL_n(R) action} has sign vector 
\[ [\sgn(\beta_{(i,j,k)})]_{(i,j,k) \in \Lambda} = 
\rho_{\hat Y}(\sgn(\bfg), [\sgn(\alpha_{(i,j,k)})]_{(i,j,k) \in \Lambda})\] and
\[ [\ln (|\beta_{(i,j,k)}|)]_{(i,j,k) \in \Lambda} = \rho_Y (\Ln(|\bfg|),[\ln
(|\alpha_{(i,j,k)}|)]_{(i,j,k) \in \Lambda}).\]
   Therefore, the map $\psi$ in \eqref{defn psi} induces
 a one-to-one correspondence between $D$ orbits in
$\calS_\Lambda(\boldR)$ and
 $\rho_{\hat Y} \times \rho_Y$ orbits in
$\boldZ_2^m \times \boldR^m.$  If $T$ and $\Ln(S)$
 are simple cross
sections for the $\rho_{\hat Y}$ and $\rho_Y$ actions respectively,
then $T \times \Ln(S)$ is a simple cross section for the $\rho_{\hat Y}
\times \rho_Y$ action.  But $T \times \Ln(S) \subseteq \boldZ_2^m \times
(\boldR_{>0})^m$ is homeomorphic to 
$\psi^{-1}(T, \Ln(S)) = \Sigma(T,S) \subseteq (\boldR \setminus
\{0\})^m.$   This yields the following corollary to Theorem \ref{Fc thm}. 
\begin{cor}\label{cross section cor}
  Let $\Lambda$ be a subset of $\Theta_n$ of cardinality $m >0,$ 
and let $Y$ and $\hat Y$ denote the real and $\boldZ_2$ 
root matrices for $\Lambda.$ Let $T$ be a simple cross section for the
  action $\rho_{\hat Y}$ induced by $\hat Y$ as in Definition
  \ref{Yaction}, and let $S \subseteq (\boldR_{>0})^{m}$ be
  such that $\Ln(S)$ is a simple cross section for the action
  $\rho_{Y}$ induced by $Y$ as in Definition \ref{Yaction}.

  Then every algebra in $\calS_\Lambda(\boldR)$ is represented at
least once in the set $\Sigma(T,S).$ If isomorphism classes of
algebras (Lie algebras) in $\calS_\Lambda(\boldR)$ are $D$ orbits,
then every algebra (Lie algebra) in $\calS_\Lambda(\boldR)$ is
isomorphic to precisely one algebra defined by an element of
$\Sigma(T,S).$

  Furthermore, if the natural map from $\calS_\Lambda(\boldR)$ to
  $\Sigma(T,S)$ is continuous, $\Sigma(T,S)$ is homeomorphic to
  $\widetilde \calS_\Lambda(\boldR).$
\end{cor}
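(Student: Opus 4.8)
The plan is to deduce Corollary~\ref{cross section cor} directly from Theorem~\ref{action-isomorphism} together with the conjugacy picture sketched immediately before the statement, so the work is essentially one of assembling facts already in hand. First I would record the structure of orbits for the matrix action $\rho_Y$: by Definition~\ref{Yaction} the orbit of $\bfz \in \boldR^m$ is $\bfz + \col(Y)$, and since $\boldR^m = \col(Y) \oplus \Null(Y^T)$ orthogonally, each such coset meets $\Null(Y^T)$ exactly once; the analogous statement over $\boldZ_2$ is that $T$ meets each coset of $\col(\hat Y)$ once by hypothesis. Hence $T \times \Ln(S)$ is a simple cross section for the product action $\rho_{\hat Y} \times \rho_Y$ of $\boldZ_2^n \times \boldR^n$ on $\boldZ_2^m \times \boldR^m$: every orbit of the product action is a product of a $\rho_{\hat Y}$ orbit with a $\rho_Y$ orbit, and it is met exactly once in each factor.

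Next I would transport this through the homeomorphism $\psi$ of \eqref{defn psi}. As indicated in the paragraph preceding the corollary, $\psi$ conjugates the $D$ action on $\calS_\Lambda(\boldR)$ to the $\rho_{\hat Y} \times \rho_Y$ action, where $g = \diag(c_1,\dots,c_n)$ corresponds to $(\sgn(\bfg), \Ln(|\bfg|)) \in \boldZ_2^n \times \boldR^n$; one checks from \eqref{GL_n(R) action} that $(g\cdot\alpha)_{ij}^k = c_i c_j c_k^{-1}\,\alpha_{ij}^k$, so $\sgn$ and $\ln|\cdot|$ transform by the root vector $\bfe_i + \bfe_j - \bfe_k$, i.e.\ by $\rho_{\hat Y}$ and $\rho_Y$ respectively. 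Since conjugation carries simple cross sections to simple cross sections, $\psi^{-1}(T \times \Ln(S))$ is a simple cross section for the $D$ action. But unwinding Definition~\ref{Sigma defn}, $\psi^{-1}(T \times \Ln(S)) = \{\,\bft \bullet \E(\bfu) : \bft \in T,\ \bfu \in \Ln(S)\,\} = \{\,\bft \bullet \bfx : \bft \in T,\ \bfx \in S\,\} = \Sigma(T,S)$, using that $\E = \Ln^{-1}$. Therefore $\Sigma(T,S)$ meets each $D$ orbit exactly once; in particular every algebra in $\calS_\Lambda(\boldR)$ is $D$-equivalent to one in $\Sigma(T,S)$, which gives the first assertion, and when isomorphism classes coincide with $D$ orbits the ``precisely one'' sharpening follows, both for general anticommutative algebras and, restricting to $\calL_\Lambda(\boldR)$, for Lie algebras.

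For the final clause, I would observe that the natural map $\pi : \calS_\Lambda(\boldR) \to \widetilde\calS_\Lambda(\boldR)$ restricts to a continuous bijection $\Sigma(T,S) \to \widetilde\calS_\Lambda(\boldR)$ by the cross-section property just established, and $\widetilde\calS_\Lambda(\boldR)$ carries the quotient topology. If the natural map $\calS_\Lambda(\boldR) \to \Sigma(T,S)$ (sending each point to the unique representative of its orbit in $\Sigma(T,S)$) is continuous, then it descends to a continuous inverse $\widetilde\calS_\Lambda(\boldR) \to \Sigma(T,S)$ of $\pi|_{\Sigma(T,S)}$, so $\pi|_{\Sigma(T,S)}$ is a homeomorphism. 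The only real subtlety in the argument is bookkeeping: making sure that the cross-section property is preserved under both the product decomposition of orbits and the conjugacy $\psi$, and that the identification $\psi^{-1}(T \times \Ln(S)) = \Sigma(T,S)$ is exactly Definition~\ref{Sigma defn} with no stray sign or exponential; none of this is deep, but it is where care is needed. Everything substantive—the equivalence of isomorphism with the two simultaneous orbit conditions—has already been done in Theorem~\ref{action-isomorphism}.
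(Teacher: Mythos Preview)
Your proposal is correct and follows essentially the same approach as the paper: the paragraph immediately preceding the corollary is the paper's entire argument, and it is precisely the conjugacy-via-$\psi$ picture you spell out (product of cross sections is a cross section for the product action, then pull back through $\psi^{-1}$ to get $\Sigma(T,S)$). Your write-up adds welcome detail the paper omits---the explicit check that $(g\cdot\alpha)_{ij}^k$ transforms by the root vector, and the quotient-topology argument for the final homeomorphism clause---but the underlying strategy is identical.
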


We illustrate the corollary with two examples.

\begin{example}\label{l4} 
Let $\Lambda = \{ (1,2,3), (1,3,4)\} \subseteq \Theta_4.$ The set
$\calS_\Lambda(\boldR)$ consists of all real anticommutative
nonassociative algebras spanned by a fixed basis $\calB = \{ x_1, x_2,
x_3, x_4 \}$ with the product determined by the relations
\begin{equation}\label{l4 defn} [x_1,x_2] = \alpha_{12}^3 \, x_3,
\qquad [x_1,x_3] = \alpha_{13}^4 \, x_4,
\end{equation} where $\alpha_{12}^3$ and $\alpha_{13}^4$ are nonzero
real numbers.  By Remark \ref{when invariant}, isomorphism classes in
$\calS_\Lambda(\boldR)$ are $D$ orbits.

    The $\boldR$-root matrix $Y$ and the $\boldZ_2$-root matrix $\hat
Y$ are given by
\[  Y = \begin{bmatrix} 1 & 1 & -1 & 0 \\ 
1 & 0 & 1 & -1 \end{bmatrix} \quad \text{and} \quad
 \hat Y = \begin{bmatrix} 1 & 1 & 1 & 0 \\ 
1 & 0 & 1 & 1 \end{bmatrix} .
\] 
Because the rank of $Y$ is two, the action $\rho_Y : \boldR^4 \times
\boldR^2 \to \boldR^2$ induced by $Y$ as in Definition \ref{Yaction}
is transitive, and $\Null(Y^T) = \{(0,0)\} \subseteq \boldR^2$ is a
simple cross section for the action.  The rank of $\hat Y$ is also
two, so the action $\rho_{\hat Y}: \boldZ_2^4 \times \boldZ_2^2 \to
\boldZ_2^2$ induced by $\hat Y$ as in Definition \ref{Yaction} is also
transitive with simple cross section $T = \{ (0,0)\} \subseteq
\boldZ_2^2.$

If we let $\Delta = \{ (1,1)\},$ then the simple cross section
$\{(0,0)\}$ for the $\rho_Y$ action is equal to $\Ln(\Delta).$ The set
$\Sigma(T,\Delta)$ as defined in Definition \ref{Sigma defn} is
 \begin{equation}\label{sigma t delta l4} \Sigma(T,\Delta) = \{  (0,0)
   \bullet (1,1)\} = \{ (1,1) \} \subseteq
   \calS_\Lambda(\boldR).\end{equation}

By Corollary \ref{cross section cor}, 
all Lie algebras in $\calL_\Lambda(\boldR)$ 
are isomorphic to the Lie algebra in $\Sigma(T,\Delta).$ 
\end{example}

\begin{example}\label{h5}
Let $\Lambda = \{ (1,2,5), (3,4,5)\} \subseteq \Theta_5.$ Products in
$\calS_\Lambda(\boldR)$ are determined by
\begin{equation}\label{h5 defn} [x_1,x_2] = \alpha_{12}^5 \, x_5,
\qquad [x_3,x_4] = \alpha_{34}^5 \, x_5,
\end{equation} relative to basis $\calB = \{ x_i \}_{i=1}^5,$ where
$\alpha_{12}^5, \alpha_{13}^5 \ne 0.$

    The $\boldR$-root matrix $Y$ and the $\boldZ_2$-root matrix $\hat
Y$ are given by
\[  Y = \begin{bmatrix} 1 & 1 & 0 & 0& -1  \\ 
0 & 0 & 1 & 1 & -1 \end{bmatrix} \quad \text{and} \quad
 \hat Y = \begin{bmatrix} 1 & 1 & 0 & 0 & 1 \\ 
0 & 0 & 1 & 1 & 1 \end{bmatrix} .
\] 

As in the previous example, $\Null(Y^T) = \{(0,0)\} \subseteq
\boldR^2$ is a simple cross section for the $\rho_Y$ action, and $T =
\{ (0,0)\} \subseteq \boldZ_2^2$ is a simple cross section for the
$\rho_{\hat Y}$ action.  By Corollary \ref{cross section cor}, all
algebras in $\calL_\Lambda(\boldR)$ are isomorphic to the algebra in
$\Sigma(T,\Delta) = \{1,1\}.$
\end{example}

Our basic strategy for describing all the Lie algebras in a stratum 
$\calS_\Lambda(\boldR)$ is as follows.
\begin{enumerate}
\item{Find sets $T \subseteq \boldZ_2^m$ and $S \subseteq
    (\boldR_{>0})^m$ so that $T$ and $\Ln(S)$ are cross sections
    for the $\rho_{\hat Y}$ and $\rho_Y$ actions, respectively.  Let
    $\Sigma(T,S)$ be as in Definition \ref{Sigma defn}.}
\item{If $S$ has dimension $d \ge 2,$ find a simple
    parametrization of the set $\Ln(S),$ if possible, in terms of
    parameters $t_1, t_2, \ldots, t_d.$ This will involve using the
    vectors $\bfw(m_1,m_2,m_3,m_4)$ defined in Definition
    \ref{w-defn}.  These vectors have the advantage of being linear
    combinations of just 4 basis vectors, with coefficients of $-1$
    and $1.$ }
\item{Show that every Lie algebra in $\calS_\Lambda(\boldR)$ is isomorphic to
    precisely one Lie algebra in $\Sigma(T,S)$ by 
    using Theorem \ref{Fc thm} or Corollary \ref{cross section cor}. }
\item{Use Theorem \ref{ji-quads-signs} to solve the Jacobi
    Identity to determine which elements of $\Sigma(T,S)$ are Lie
    algebras.  Sometimes, this may be done in two steps:
\begin{enumerate}
\item{Find the values of $|\alpha_{ij}^k|$  by solving a system of polynomial equations
  in  $t_1, t_2, \ldots, t_d.$ }
\item{Find which possible signs may be assigned to the structure
    constants so that the Jacobi Identity is satisfied.  }
\end{enumerate}}
\end{enumerate}

For many $\Lambda$'s, the calculations described above are
essentially identical.   As we proceed, we
will repeatedly return to prototypical examples of each 
type covered by
 Theorem 
\ref{analyze-exhaustive}:
\begin{itemize}
\item{Analysis of a stratum $\calS_\Lambda(\boldR)$ as in 
Part \ref{1qm2} of Theorem \ref{analyze-exhaustive}
    is worked out in Examples \ref{1qm2-a} and \ref{1qm2-b}.}
\item{Analysis of a set  $\calS_\Lambda(\boldR)$ as in Part \ref{2qm2} 
of Theorem \ref{analyze-exhaustive}
    is in Example \ref{2qm2b-a}.}
\item{Analysis of a set $\calS_\Lambda(\boldR)$ as in Part \ref{1qm3} of
 Theorem \ref{analyze-exhaustive}
    is worked out
  in Examples \ref{1qm3-a}, \ref{1qm3-b}, \ref{1qm3-c} and
    \ref{1qm3-d}.}
\item{An example as in Part \ref{1qm2+1qm3} 
of Theorem \ref{analyze-exhaustive}    is analyzed in Examples 
\ref{example-1a}, \ref{example-1b}, \ref{example-1c}, 
\ref{example-1d}, and \ref{example-1e}.}
\end{itemize}

\section{Cross sections}\label{cross sections}

We would like to generalize the approach used in Example \ref{l4} to
apply to cases in which the simple cross section $\Ln(\Delta)$ is not
finite, as it was in Example \ref{l4}.  In that example, we found that
all algebras in $\calS_\Lambda(\boldR)$ were isomorphic to the one
with structure constants defined by
\[ \{ (1,1)\} =\Sigma( \{(0,0)\}, \{(1,1)\}) = \Sigma( \Null(\hat
Y^T), \E(\Null(Y^T)))  .\]

\begin{remark}\label{okay}
There is an obvious generalization.  When $Y$ does not have
maximal rank, we may use an analogous definition to obtain a set of
form $\Sigma(T,\Delta)$ where $T = \Null(\hat Y^T)$ and $\Delta =
\E(\Null(Y^T)).$ However, this set is noncompact when $\Null(Y^T)$ is
infinite, and requires exponential functions for its parametrization.
Yet, this may be a useful simple cross section if boundedness 
and algebraic charts are not required. 
\end{remark}

Now we define the type of set that will be our simple cross section
for the $\rho_Y$ action in Equation \eqref{Yaction}.
\begin{defn}\label{definition of cross section}
Let $Y$ be an $m \times n$ root matrix. 
Let $\bfa_0$ be a point in $(\boldR_{>0})^m$ and let 
\begin{align*}
  \Delta_{\bfa_0} &=  \{ \bfa_0 + \bfw \, : \,  \bfw \in \Null(Y^T) \}
\bigcap  \, \boldR_{>0}^m , \, \text{so}\\
\Ln (\Delta_{\bfa_0} ) 
 &= \{ \Ln  (\bfa_0 + \bfw)  \, : \, \bfw  \in
\Null(Y^T), \bfa_0 + \bfw \in  (\boldR_{>0})^m  \}.
\end{align*} 
For any $p \ne 0,$ let 
\begin{align}\label{defn Delta p}
  \Delta^p_{\bfa_0} &= \{  \Exp (p\bfa) \, : \, \bfa \in \Ln(\Delta_{\bfa_0})  \} \\
  &= \{ (a_1^p, a_2^p, \ldots, a_m^p) \, : \, (a_1, \ldots, a_m) \in
  \Delta_{\bfa_0} \} \notag 
\end{align} We say that the point
$\bfa_0$ is the {\em center point} of $\Delta_{\bfa_0}^p.$
\end{defn}

\begin{remark}\label{Delta p}  Because  
$ [\ln (|\alpha_{ij}^k|^p)]_{(i,j,k) \in \Lambda} = p [\ln
 ( |\alpha_{ij}^k|) ] _{(i,j,k) \in \Lambda},$ the condition in Part \eqref{same
    Y orbit} of the Theorem \ref{action-isomorphism}
 holds if and only if the vectors $ [\ln
 ( |\alpha_{ij}^k|^p)]_{(i,j,k) \in \Lambda}$ and $ [\ln
 ( |\beta_{ij}^k|^p)]_{(i,j,k) \in \Lambda}$ are in the same orbit for
  the action $\rho_Y$ for any nonzero $p.$ Therefore, for any $p \ne
  0,$ $\Ln (\Delta^p)$ is a simple cross section for the $\rho_Y$ action
  if and only if $\Ln (\Delta)$ is.   
\end{remark}

Our next goal is to show that under suitable hypotheses, the set $\Ln
(\Delta_{\bfa_0}^p)$ is a simple cross section for the
$\rho_Y$-action.  First we need an elementary lemma.
 
\begin{lemma}\label{pi lemma}  
  Let $Y$ be an $m \times n$ matrix over $\boldR.$ Let $\rho_Y :
\boldR^n \times \boldR^m \to \boldR^m$ be the action defined in
Definition \ref{Yaction}.  Suppose that $\Null(Y^T)$ is nontrivial
with basis $\calD = \{ \bfw_1, \ldots, \bfw_d\}.$ Define the map
$\pi_Y: \boldR^m \to \boldR^d$ by
\begin{equation}\label{pi}  \pi_Y: \bfv \mapsto
 (\bfv \cdot \bfw_1, \ldots, \bfv \cdot \bfw_d),\end{equation}
where $\cdot$ denotes the standard dot product of two vectors in $\boldR^m.$

Let $S $ be a nonempty subset of $\boldR^{m}.$ The set $S$ meets each
orbit of $\rho_Y$ at least once if and only if the restriction of
$\pi_Y$ to $S$ is surjective, and $S$ is a simple cross section for
the action if and only if the restriction of $\pi_Y$ to $S$ is a
bijection between $S$ and $\boldR^m.$
\end{lemma}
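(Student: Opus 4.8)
The plan is to exploit the fact that orbits of $\rho_Y$ are precisely the cosets $\bfz + \col(Y)$, and that $\col(Y)$ and $\Null(Y^T)$ are orthogonal complements in $\boldR^m$. The key observation is that $\pi_Y$ is, up to a fixed linear isomorphism of $\boldR^d$, the orthogonal projection of $\boldR^m$ onto $\Null(Y^T)$: its kernel is exactly $\col(Y)$. First I would verify this. If $\bfv \in \col(Y)$, then $\bfv \cdot \bfw_i = 0$ for all $i$ since each $\bfw_i \in \Null(Y^T) = \col(Y)^\perp$, so $\col(Y) \subseteq \ker \pi_Y$. Conversely, since $\calD$ is a basis for $\Null(Y^T)$, if $\pi_Y(\bfv) = \bfzero$ then $\bfv$ is orthogonal to all of $\Null(Y^T)$, hence $\bfv \in \Null(Y^T)^\perp = \col(Y)$. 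Thus $\ker \pi_Y = \col(Y)$, and since $\pi_Y$ is linear with image of dimension $m - \dim\col(Y) = \dim\Null(Y^T) = d$, the map $\pi_Y$ is surjective onto $\boldR^d$.

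Next I would translate membership in orbits into equality of $\pi_Y$-values. For $\bfv_1, \bfv_2 \in \boldR^m$, the points $\bfv_1$ and $\bfv_2$ lie in the same $\rho_Y$-orbit if and only if $\bfv_1 - \bfv_2 \in \col(Y)$, which by the previous paragraph holds if and only if $\pi_Y(\bfv_1) = \pi_Y(\bfv_2)$. In other words, $\pi_Y$ separates orbits exactly, and it induces a bijection between the orbit space $\boldR^m/\col(Y)$ and $\boldR^d$. With this in hand, the two claims of the lemma are immediate. For the first: $S$ meets every orbit at least once if and only if $\pi_Y(S)$ meets every fiber of $\pi_Y$, i.e. $\pi_Y(S) = \pi_Y(\boldR^m) = \boldR^d$, i.e. $\pi_Y|_S$ is surjective. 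For the second: $S$ meets every orbit exactly once if and only if $\pi_Y|_S$ is both surjective (meets every orbit at least once) and injective (meets every orbit at most once, since two points of $S$ in the same orbit would have the same $\pi_Y$-value); that is, $\pi_Y|_S$ is a bijection from $S$ onto $\boldR^d$.

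The only real content here is the identification $\ker\pi_Y = \col(Y)$ together with the orthogonality $\col(Y)^\perp = \Null(Y^T)$; everything after that is bookkeeping. I expect no genuine obstacle — the statement of the lemma as written says ``$\boldR^m$'' where it should presumably read ``$\boldR^d$'' in the bijection clause, and I would silently use $\boldR^d$ as the target throughout, which is the only consistent reading given that $\pi_Y$ takes values in $\boldR^d$. One small point worth stating explicitly is that the particular choice of basis $\calD$ does not matter: a different basis changes $\pi_Y$ by postcomposition with an invertible $d \times d$ matrix, which alters none of surjectivity, injectivity, or the kernel.
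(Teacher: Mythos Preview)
Your proof is correct and follows essentially the same approach as the paper: both rely on the orthogonal decomposition $\boldR^m = \col(Y) \oplus \Null(Y^T)$ to identify $\rho_Y$-orbits with fibers of $\pi_Y$. Your version is slightly more streamlined in that you show $\ker\pi_Y = \col(Y)$ directly, whereas the paper first passes through an auxiliary orthonormal basis for $\Null(Y^T)$ and then relates the resulting projection back to $\pi_Y$ by an invertible linear map; your observation about the typo (the target should be $\boldR^d$, not $\boldR^m$) is also correct.
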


\begin{proof}  
The orbits of $\rho_Y$ are the sets $\bfz + \col(Y),$ where $\bfz \in
\boldR^m.$ Because the vector space $\boldR^m$ is the orthogonal
direct sum of the null space $\Null(Y^T)$ and the column space
$\col(Y),$ we may always choose a unique $\bfz$ in $\Null(Y^T)$ in
each orbit.  Let $\calB = \{ \bfv_1, \ldots, \bfv_d \}$ be an
orthonormal basis for $\Null(Y^T).$ 
 Let $p: \boldR^m \to\boldR^d$ denote the map
\[ p(\bfv) = (\bfv \cdot \bfv_1, \ldots, \bfv \cdot \bfv_d),\] which
is orthogonal projection from $\boldR^m$ to $\Null(Y^T),$ relative to
the basis $\calB.$ It follows that the set $S$ meets the orbit $\bfz +
\col(Y), $ with $\bfz \in \Null(Y^T),$ if and only if the coordinate
vector $[\bfz]_\calB$ of $\bfz$ with respect to $\calB$ is in $p(S).$
Hence $S$ meets each orbit at least once if and only if $p|_S$ is
surjective, and $S$ meets each orbit at most once if and only if
$p|_S$ is one-to-one.

Since $\{ \bfw_1, \ldots, \bfw_d \}$ is a linearly independent set,
there exists an invertible linear transformation
$T: \boldR^m \to \boldR^m$ such that $p \circ T = \pi_Y.$ 
Hence $p|_S$ is bijective if and
only if $\pi_Y|_S$ is bijective, and $p|_S$ is onto if and only if
$\pi_Y|_S$ is onto. 
\end{proof}

Now we prove Theorem \ref{semialgebraic}, 
showing that $\Sigma(T,\Delta^p_{\bfa_0})$ is semi-algebraic subset
  of $\boldR^m$ with compact closure.

\begin{proof}
Let $\{ \bfw_1, \ldots, \bfw_d\}$ be a basis for $\Null(Y^T).$
Clearly $\Delta_{\bfa_0}$ is semi-algebraic, as the defining condition
\[ \bfx = (x_j) = \bfa(t_1, \ldots, t_d) = \bfa_0 + \sum_{i=1}^d t_i
\bfw_i > 0 \] for $\Delta_{\bfa_0}$ is equivalent to requiring that
$x_j > 0 $ for all $j=1, \ldots, m$ and that $Y^T (\bfx - \bfa_0) =
\bfzero.$ Hence, $\Delta_{\bfa_0}$ is a semi-algebraic subset of 
$\boldR^m.$  

Let $p=r/s,$ with $r,s \in \boldZ_{>0}.$   Then 
$\bfx \in \Delta_{\bfa_0}^p$ if and only if 
$x_i^r = y_i^s$  for $\bfy \in \Delta_{\bfa_0}.$   This means
that $\Delta^p_{\bfa_0}$ is the projection of the semi-algebraic set 
\[ \{ (\bfx,\bfy) \, : \, \bfy \in \Delta_{\bfa_0}, x_i^r = y_i^s
\enskip \text{for all $i$}\}\]
onto the first factor.  
The projection of a semi-algebraic set is semi-algebraic, hence
$\Delta^p_{\bfa_0}$ is semi-algebraic.

To see that $\Delta^p_{\bfa_0}$ has compact closure, by continuity of
the map $x \mapsto x^p,$ it suffices to
show  that  $\Delta_{\bfa_0}$ has compact closure. 
Since each row of $Y$ is of the form $\bfe_i + \bfe_j - \bfe_k$ for
$i < j < k,$ $Y [1]_{n \times 1} = [1]_{m \times 1},$ where
$[1]_{k \times 1}$ denotes the $k \times 1$ vector with all entries
$1.$   As $[1]_{m
  \times 1}$ is in the column space of $Y,$ 
$\Null(Y^T)$ is contained in the hyperplane $[1]_{m \times 1}^\perp.$ 
Therefore, $\Delta_{\bfa_0}$ is contained in the intersection of 
$\bfa_0 +  [1]_{m \times 1}^\perp$ and $(\boldR_{>0})^m,$ a bounded set.
Hence,   $\Delta^p_{\bfa_0}$ is bounded.   

Now consider $\Sigma(T,\Delta^p).$   Let $T = \{ \bfs_1, \ldots,
\bfs_k \}.$   
The set $\Sigma(T,\Delta^p)$  is the union of the $k$ disjoint 
sets $\{ \bfs_i \} \bullet \Delta^p,$
where $i=1, \ldots, k.$  But since coordinate-wise multiplication is a
linear map, the sets $\{ \bfs_i \} \bullet \Delta^p$ are all
semi-algebraic.  The union of the semi-algebraic sets is
semi-algebraic,
 making $\Sigma(T,\Delta^p)$  algebraic.  Clearly the finite union of
  bounded sets is bounded, so
  $\Sigma(T,\Delta^p)$ has compact closure.   
\end{proof}

Before we proceed with the proof of Theorem \ref{Fc thm} 
we give an example of an application of Lemma \ref{pi lemma}.
\begin{example}\label{1qm2-a}
Let $n=7,$ let 
\[ \Lambda = \{ (1,2,4), (1,3,5), (1,5,6), (2,4,6), (2,5,7),(3,4,7)\}\]
and let $Y$ be the $6 \times 7$ real root matrix for $\Lambda.$ 
The vector 
 \[ \bfw_1 =  (1,-1,0,0,-1,1)^T \] 
is a basis for $\Null(Y^T).$
Let $\bfa_0 = (1,1,1,1,1,1)^T,$ and for  $s \in \boldR$ let
\[ \bfa(s) = \bfa_0 + s\bfw_1 = (1,1,1,1,1,1)^T +
s(1,-1,0,0,-1,1)^T. \] 
The vector $\bfa(s)$ is in $(\boldR_{>0})^6$ if
and only if $s \in (-1,1).$ The set $\Delta_{\bfa_0}$ as in Definition
\ref{definition of cross section} is
\begin{align*}
 \Delta_{\bfa_0} &= \{  \bfa(s)  \, : \, -1 < s < 1 \} \\ 
&=
 \{   (1,1,1,1,1,1)^T + s(1,-1,0,0,-1,1)^T \, : \, -1 < s < 1 \} .
\end{align*}
For $s \in (-1,1),$
\[ \Ln(\bfa(s)) =   (\ln(1 +s),\ln(1 -s),0,0,\ln(1 -s),\ln(1+s)), \]
and 
\[
 \Ln(\Delta_{\bfa_0}) = \{ \Ln (\bfa(s)) \, : \,  -1 < s < 1  \} .\]
The map $\pi_Y : \boldR^6 \to \boldR$ as in
  Lemma \ref{pi lemma} is expressed in terms of the parameter $s$ by
 \[ \pi_Y : \Ln( \bfa(s)) \mapsto \Ln (\bfa(s)) \cdot (1,-1,0,0,-1,1) = 2 \,
 \ln \left( \frac{ 1 +s}{1-s} \right).\] 
The function $s \mapsto \pi_Y(\Ln( \bfa(s)))$ is monotonically
 increasing on $(-1,1)$ with $ \lim_{s \to -1} \phi(s) = -\infty$ and $ \lim_{s
   \to -1} \phi(s) = \infty.$ Hence it is a bijection from $(-1,1)$ to
 $\boldR.$ Therefore, by Lemma \ref{pi lemma}, $\Ln(\Delta_{\bfa_0})$ is
 a simple cross section for the action $\rho_Y.$ 

 Now consider the action $\rho_{\hat Y} : \boldZ_2^7 \times \boldZ_2^6
 \to \boldZ_2^6$ defined by the $\boldZ_2$ root matrix $\hat Y.$ It
 can be shown that the rank of $\hat Y$ over $\boldZ_2$ is $5$ and that
\[  T = \{ (0,0,0,0,0,0)^T, (0,0,0,0,0,1)^T \} \]
is a simple cross section for the action.  
The set $\Sigma(T,\Delta_{\bfa_0})$ as in Definition \ref{Sigma defn} is 
 \begin{equation}\label{1qm2-a rep set} 
\Sigma(T,\Delta_{\bfa_0}) =  \{   (1 + s,1 - s,1,1,1-s, \pm (1+s))^T \, : \, -1 < s < 1 \} .\end{equation}
Note that this is a semi-algebraic subset of $\boldR^6$ with compact
closure, with two charts, each parametrized linearly by $s.$  

Isomorphism classes are $D$ orbits by Remark \ref{when invariant}.
By Corollary \ref{cross section cor}, every Lie algebra in
$\calS_\Lambda(\boldR)$ is isomorphic to precisely one algebra whose
structure constants are encoded by a point in $\Sigma(T,\Delta) \cap
\calL_7(\boldR).$ In Example \ref{1qm2-b} we shall determine which algebras in
$\Sigma(T,\Delta)$ satisfy the Jacobi Identity.
\end{example}

When $S$ is one-dimensional, one may use univariate calculus as in the
previous example to show that the map $\pi_Y : S \subseteq \boldR^m
\to \boldR^d$ as in Equation \eqref{pi} of Lemma \ref{pi lemma} is a
bijection.  It is more difficult to show that a map into a
higher-dimensional Euclidean space $\boldR^d$ is a bijection.  

\begin{defn}\label{Fc defn}   
Let $\Lambda$ be an index set of cardinality $m > 0$ whose
root matrix $Y$ has $\dim \Null(Y^T) = d > 0.$
Let $S$ be a $d$-dimensional subset of $\boldR^m,$ 
and let   $\pi_Y : \boldR^m
\to \boldR^d$ be a map of the form in
 Equation \eqref{pi} of Lemma \ref{pi lemma}.
Let $\bfa: D \to S$ be a parametrization of $S;$ that is 
a bijection  sending $(t_1, \ldots, t_d)$ in
a subset $D$ of $\boldR^d$   to $\bfa(t_1, \ldots, t_d)$ in $S.$   

 Define for $c \in \boldR,$ the map $F_c: D \to \boldR^d$ by
\begin{equation}\label{Fc defn equation} 
F_c(t_1, \ldots,t_d) = c (\pi_Y \circ \Ln \circ \bfa )(t_1, \ldots,t_d),
\end{equation} 
where $\Ln$ is the coordinate-wise logarithm map as
defined in \eqref{L defn}.
\end{defn}

This definition and its utility are illustrated in the next
example. 
For a field $K,$ we use $i: K^n \to \boldP_n(K)$ to denote the map
embedding $K^n$ in $n$-dimensional projective space, sending $(x_1,
\ldots, x_n)$ in $K^n$ to $[x_1 : \cdots : x_n : 1]$ in $\boldP_n(K).$
We use $P_n(\boldR)_{\ge 0}$ to indicate the subset
\[ P_n(\boldR)_{\ge 0} = \{ [y_1 : \cdots : y_{n+1}] \, : \, y_1,
\ldots, y_{n+1} \ge 0 \} \] of $P_n(\boldR),$ and we define
$P_n(\boldR)_{>0} $ analogously.

\begin{example}\label{1qm3-a} 
Let $n=7,$ and let
\[ \Lambda= \{(1,2,4), (1,3,5), (1,4,6), (1,6,7), (2,3,6), (2,5,7),
(3,4,7) \}.\] 
The real $7 \times 7$ root matrix $Y$ for $\Lambda$ has
rank $5$ and the vectors
\begin{align*} 
\bfw_1 &=(0,1,0,-1,-1,1,0)^T, \text{and} \\ 
\bfw_2 &= (1,0,0,-1,-1,0,1)^T
\end{align*} 
span $\Null(Y^T).$ The $\boldZ_2$-root matrix $\hat Y$
also has rank $5$ with
\[ T = \myspan_{\boldZ_2}\{ (0,0,0,0,0,0,1)^T, (0,0,0,0,0,1,0)^T \}\]
being a simple cross section for the $\rho_{\hat Y}$ action on
$\boldZ_2^7.$

Let $\bfa_0 = (1,2,1,1,1,2,1)^T.$ We use  $\bfa_0$ as
our center point for $\Delta_{\bfa_0},$ rather than $\bfb_0 =
(1,1,1,1,1,1,1)^T,$ because the algebra defined by $\bfa_0$ satisfies
the Jacobi Identity, whereas an algebra defined by $\bfb_0$ is not a
Lie algebra.  For $(s,t)$ in $\boldR^2,$ let
\begin{align*} \bfa(s,t) &= \bfa_0 + s \bfw_1 + t \bfw_2 \\
&=(1+t,2+s, 1,1-s-t, 1-s-t, 2+s, 1+t )^T .
\end{align*} The entries of $\bfa(s,t)$ are all positive if and only
if $(s,t)$ is in the triangle interior
\[ D = \{ (s,t) \, : \, s > -2, t > -1, \text{and} \, s+t < 1 \}
\subseteq \boldR^2.  \] Here, the set $\Delta_{\bfa_0} \subseteq
\boldR^7$ as in Definition \ref{definition of cross section} is given
by
\[ \Delta_{\bfa_0} = \{ \bfa(s,t) \, : \, (s,t) \in D\},\] and $
\Ln(\Delta_{\bfa_0})$ is the set of all points of form
\begin{tiny}
\[ \left(\ln(1+t), \ln(2+s), 0, \ln(1-s-t), \ln(1-s-t), \ln(2+s), \ln(
1+t) \right)^T \]
\end{tiny} with $(s,t)$ in $D.$

In order to apply Lemma \ref{pi lemma} to show that
$\Ln(\Delta_{\bfa_0})$ is a simple cross section for the $\rho_Y$
action we need to show that the map $F_1=  \pi_Y \circ \Ln \circ \bfa: D \to \boldR^2
$ given by
\begin{align*} 
F_1(\bfa(s,t))  &= 
\left( \Ln(\bfa(s,t)) \cdot \bfw_1, \Ln(\bfa(s,t)) \cdot \bfw_2 \right) \\ 
&= (2 \ln(2+s) - 2 \ln(1-s-t), 2 \ln(1+t) - 2 \ln(1-s-t)) \\ 
&= \left( 2 \ln \left( \frac{2+s}{1-s-t} \right), 
2 \ln \left( \frac{1 +t}{1-s-t}
\right) \right)
\end{align*} is bijection.  Compose this map with coordinate-wise
exponentiation to get the map $G_1 = E \circ F_1:$ 
\[ G_1(s,t) = \left( \left(
    \frac{2+s}{1-s-t} \right)^2 , \left( \frac{1 +t}{1-s-t} \right)
\right)^2.\] 
The map $\pi_Y \circ \bfa$ is a bijection from $D$ to
$\boldR^2$ if and only if $E \circ \pi_Y \circ \Ln \circ \bfa$ has
image $(\boldR_{>0})^2.$ This in turn is true if and only if the map
\[ G_{1/2}(s,t) = (E \circ F_{1/2})(s,t) = \left(
\frac{2+s}{1-s-t}, \frac{1 +t}{1-s-t} \right),\]
where $F_{1/2}$ is as defined in
Definition \ref{Fc defn}, has image $(\boldR_{>0})^2.$

We compose $G_{1/2}$ with the imbedding $i$ of $\boldR^2$ into
$\boldP_2(\boldR)$ and obtain
\[ ( i \circ G_{1/2}) (s,t) = [t + 1 : s + 2: - s- t + 1 ].\] In order
to show that $\pi_Y : D \to \boldR^2$ is bijective it suffices to show
that $i \circ G_{1/2}$ is a bijection from $D$ onto $P_2(\boldR)_{>
  0}.$

Clearly $i \circ G_{1/2}$ is one-to-one as $i \circ G_{1/2}$ extends
to the automorphism
\[ (s,t,u) \mapsto [ t+u : s + 2u : - s- t + 2u ]\] of
$\boldP_2(\boldR).$ The boundary of $D$ is mapped by $i \circ G_{1/2}$
onto the boundary of $P_2(\boldR)_{\ge 0},$ so by continuity, $P$ 
sends $D$ onto $P_2(\boldR)_{> 0}.$ Hence $\pi_Y$ is a bijection and by
Lemma \ref{pi lemma}, $\Ln(\Delta_{\bfa_0})$ is a simple cross section
for the $\rho_Y$ action.

By Remark
\ref{when invariant}, isomorphism classes in $\calS_\Lambda(\boldR)$ 
are $D$ orbits. By Theorem \ref{Fc thm}, every Lie algebra in
$\calS_\Lambda(\boldR)$ is isomorphic to precisely one Lie algebra
represented by an element in the intersection parametrizing set
\begin{tiny}
 \[ \Sigma(T, \Delta_{\bfa_0}) = \{ \left(1+t,2+s, 1, 1-s-t, 1-s-t,
\pm(2+s), \pm( 1+t) \right)^T \, : \, s,t \in D\} \]
\end{tiny} and $\calL_7(\boldR).$ By the same theorem,
we can use a different simple cross section for the
$\rho_Y$ action to define $\Sigma.$  
 By Remark \ref{Delta p}, the set $\Sigma(T, \Delta_{\bfa_0}^{1/2})$
 of points of form   
\[
 \left((1+t)^{1/2},(2+s)^{1/2}, 1,(1-s-t)^{1/2}, (1-s-t)^{1/2},
   \pm(2+s)^{1/2}, \pm( 1+t)^{1/2} \right)^T  ,\]
with $(s,t) \in D,$
is also a parametrizing set for $\calS_\Lambda(\boldR).$ We have not yet considered
the issue of the Jacobi Identity; we will return to this in Example
\ref{1qm3-d}.
\end{example}

Unfortunately, the method for showing bijectivity in the previous
example does not generalize broadly. Instead, one may use the
following generalization of 
Hadamard's Global Inverse Function Theorem to show injectivity.  

\begin{thm}[\cite{gordon-72}]\label{gordon}
  Let $M_1$ and $M_2$ be connected, oriented
$n$-dimensional smooth manifolds of class $C^2,$ with $M_2$ simply
connected.      
A $C^1$ map $f: M_1 \to M_2$
is a diffeomorphism if and only if it is proper and the Jacobian $\det
(\partial f_i / \partial x_j)$ never vanishes.
\end{thm}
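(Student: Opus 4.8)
The plan is to reduce the statement to the theory of covering spaces; Gordon's theorem is essentially a packaging of the fact that a proper local diffeomorphism onto a simply connected base is a diffeomorphism. The reverse implication is immediate and I would dispatch it first: a diffeomorphism is in particular a homeomorphism, hence a proper map, and differentiating the identity $f^{-1}\circ f = \Id_{M_1}$ in local coordinates shows that $df_p$ is invertible for every $p$, so the Jacobian never vanishes. (Here the orientation and simple-connectivity hypotheses play no role; I would note in passing that the condition ``$\det(\partial f_i/\partial x_j)$ never vanishes'' is coordinate-independent, and that with $M_1$ connected and both manifolds oriented it moreover has a well-defined constant sign, which is presumably why orientations are listed at all.)

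For the forward implication, assume $f$ is $C^1$, proper, and has nowhere-vanishing Jacobian. First, by the Inverse Function Theorem $f$ is a local diffeomorphism, hence an open map; since $M_1$ is nonempty, $f(M_1)$ is a nonempty open subset of $M_2$. A proper continuous map into a locally compact Hausdorff space is closed, so $f(M_1)$ is also closed, and connectedness of $M_2$ forces $f$ to be surjective. Next I would show $f$ is a covering map. Fix $y \in M_2$; the fibre $f^{-1}(y)$ is compact by properness and discrete because $f$ is a local homeomorphism, hence finite, say $f^{-1}(y) = \{x_1,\dots,x_k\}$. Choose pairwise disjoint open sets $U_i \ni x_i$ on each of which $f$ restricts to a homeomorphism onto an open set. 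Since $f$ is closed, $f\bigl(M_1 \setminus \bigcup_i U_i\bigr)$ is closed and does not contain $y$, so there is a connected open neighbourhood $V \ni y$ with $V \subseteq \bigcap_i f(U_i)$ and $V \cap f\bigl(M_1 \setminus \bigcup_i U_i\bigr) = \emptyset$; then $f^{-1}(V) = \bigsqcup_i (f|_{U_i})^{-1}(V)$, with each piece mapped homeomorphically onto $V$, so $V$ is evenly covered. Thus $f$ is a covering map.

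To finish I would invoke simple connectivity of $M_2$: every covering of a simply connected, locally path-connected space is trivial, i.e.\ a disjoint union of copies of the base mapped by the obvious projection. Since $M_1$ is connected it consists of a single such copy, so $f$ is a homeomorphism; being also a local diffeomorphism, $f$ is a diffeomorphism, completing the proof.

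The main obstacle is the middle step — promoting ``proper local homeomorphism'' to ``covering map.'' The delicate points are the finiteness of the fibres and, above all, the use of the closedness of proper maps to locally compact Hausdorff spaces to produce a single evenly-covered neighbourhood $V$ serving all sheets $U_i$ simultaneously. One must also check that the manifold hypotheses supply exactly what is needed: local path-connectedness of $M_2$ (automatic) is what makes the covering-space/monodromy argument run, while local compactness and Hausdorffness (again automatic) are what make properness equivalent to ``closed with compact fibres.'' As an alternative I would mention routing the surjectivity and the ``fibre size is a constant $k$'' part through the Brouwer degree — which is where the orientation hypotheses genuinely enter — but even then one still needs the covering-space argument above to conclude $k = 1$.
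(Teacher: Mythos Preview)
The paper does not prove this theorem at all: it is stated with the citation \cite{gordon-72} and then immediately used as a black box (in Example~\ref{example-1a} and in the argument following Lemma~\ref{main lemma}). There is therefore no ``paper's own proof'' to compare your proposal against.

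That said, your argument is the standard and correct one. The reduction to covering-space theory via the chain ``nonvanishing Jacobian $\Rightarrow$ local diffeomorphism $\Rightarrow$ open map'' together with ``proper $\Rightarrow$ closed (into locally compact Hausdorff) $\Rightarrow$ surjective,'' followed by the finite-fibre/evenly-covered argument and the simple-connectivity of $M_2$, is exactly how Gordon's theorem (and its antecedent, Hadamard's global inverse function theorem) is usually established. Your handling of the delicate point---using closedness of $f$ to carve out a single neighbourhood $V$ evenly covered by all sheets simultaneously---is correct. Your side remark that the orientation hypothesis is inessential for this route, and would only matter if one organised the proof around Brouwer degree instead, is also accurate.
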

Recall that $f: M_1 \to M_2$ is proper if for all compact $K \subseteq
M_2,$ the preimage $f^{-1}(K)$ in $M_1$ is compact.

In the next example, we show how to use this theorem to show
that $\Delta$ is a global cross section.  
\begin{example}\label{example-1a} 
  Let $\Lambda \subseteq \Theta_7$ be the set
\begin{align*} 
\Lambda = \{ (1,2,3),& (1,3,4), (1,4,5),  (1,5,6), (1,6,7), \\ 
& (2,3,5),  (2,4,6), (2,5,7), (3,4,7) \}.
\end{align*} 
By Remark \ref{when invariant}, 
isomorphism classes in  
$\calS_\Lambda(\boldR)$ are $D$ orbits.

The set 
$T = \myspan_{\boldZ_2}\{  \bfe_7, \bfe_8, \bfe_9 \}$
is a simple cross section for the $\rho_{\hat Y}$ action. 
The vectors 
\begin{alignat*}{3}
\bfw_1  & =  (0,-1,0,   1,0,1,  -1,0,0)^T\\
\bfw_2 &= (-1,0,1,   0,0,0,   0,1,-1)^T \\
\bfw_3 &= (-1,0,0,   0,1,0,   1,0,-1)^T
\end{alignat*} 
span $\Null(Y^T).$ 

The algebra defined by $\bfa_0 = (1,1,1,1,1,2,2,1,1)^T$
satisfies the Jacobi Identity.  For $s,t,u \in \boldR,$ let 
\begin{equation}\label{a(s,t,u)}
 \bfa(s,t,u) = \bfa_0 + s\bfw_1 + t\bfw_2 + u\bfw_3 ,\end{equation}
and let
\[ \Delta_{\bfa_0} = \{ \bfa(s,t,u) \, : \, (s,t,u) \in \boldR \} \cap
(\boldR_{>0})^9.\]
The domain $D$ for the parametrization $\bfa:  D \to \boldR^9$ of
$\Delta_{\bfa_0}$  is the bounded convex set defined by
 the five inequalities
\[ 
 1 - t- u > 0, 1 - s> 0,   1 + t> 0, 
 1 +s> 0,  
 1+u> 0. \]

For $(s,t,u) \in D,$
 $F = \pi_Y \circ \Ln \circ \bfa(s,t,u)$ is equal to
\[
\left(\ln \left( 
\frac{(1+s)(2+s)}{(1-s)(2-s+u)}\right),
\ln \left( \frac{(1+t)^2}{(1-t-u)^2}\right),
\ln \left(
  \frac{(1+u)(2-s+u)}{(1-t-u)^2}\right)\right) .\]
Define the map $G: D \to \boldR^3$ by
\begin{multline}\label{G}  G(s,t,u) =  (E \circ F )(s,t,u) =
  \\ 
\left(
\frac{(1+s)(2+s)}{(1-s)(2-s+u)},
\frac{(1+t)^2}{(1-t-u)^2},
  \frac{(1+u)(2-s+u)}{(1-t-u)^2}\right).
\end{multline}
If we embed $D$ into $P_3(\boldR)$  using $i: \boldR^3 \to P_3(\boldR),$
the map $i \circ G:  i(D) \to P_3(\boldR)$ may be expressed as 
\[ (s,t,u,v) \mapsto 
[p_1(s,t,u):p_2(s,t,u):p_3(s,t,u):p_4(s,t,u)]\] where the
polynomials $p_1, p_2, p_3$ and $p_4$ are
\begin{align*}
p_1(s,t,u) &=  (v+s)(2v+s)(v-t-u)^2\\
p_2(s,t,u) &= (v+t)^2(v-s)(2v-s+u)\\
p_3(s,t,u) &= (v+u)(2v-s+u)^2(v-s) \\
p_4(s,t,u) &= (v-s)(2v-s+u)(v-t-u)^2 .
\end{align*}
Note that this map is not defined at $(s,t,u,v)=(1,-1,2,1),$ hence can
not be extended to the boundary of $i(D).$ 

However, the map $G$ is proper.  To show this, we need to show that
if a sequence of points $\bfx_i$ approaches $\partial D,$ the sequence
$G(\bfx_i)$ approaches $\partial (\boldR_{>0})^3.$  As $\bfx_i \to \partial
D,$
some numerator or denominator of a coordinate function
 in Equation \eqref{G} must go to zero.  Then that coordinate will go
 to zero or infinity, unless both the numerator and denominator go to
 zero as $\bfx_i \to \partial
D.$  There are three cases to consider, one for each coordinate
function.  If the numerator and denominator of 
$G_1$  go to zero simultaneously,
then $s \to 1$ and $s \to -1,$ a contradiction. 
 (Note that $2 -s + u = (1-s) + (1+u)$
can only go to zero if both $1-s$ and $1+u$ go to zero.)  If the
numerator and denominator of $G_2$ both go to zero, then $u \to 2,$ so
$1 + u$ does not go to zero.  This implies that $G_3 \to \infty.$  If 
the  numerator and denominator of $G_3$ both go to zero, then $t \to
2.$  Then $G_2 \to \infty.$   Thus, as  $\bfx_i \to \partial
D,$ $G(\bfx_i) \to \partial (\boldR_{>0})^3.$  Hence, $G$ is proper.  

The Jacobian matrix for $F$ at $(s,t,u)$ is 
{\begin{tiny}
\[   J(s,t,u) =
\begin{bmatrix}
\frac{1}{1+s} + \frac{1}{2+s} + \frac{1}{1-s} + \frac{1}{2-s+u}&   0
&  \frac{1}{2-s+u}\\
0 & \frac{2}{1+t} + \frac{2}{1-t-u} & \frac{2}{1-t-u} \\
 \frac{1}{2-s+u} & \frac{2}{1-t-u} 
& \frac{1}{1+u}  +  \frac{1}{2-s+u} + \frac{2}{1-t-u} 
\end{bmatrix}
.\]
\end{tiny}} For all $(s,t,u) \in D,$ the entries of $J(s,t,u)$ are
positive.  For $(s,t,u) \in D,$ the matrix $J(s,t,u)$ is strictly
diagonally dominant, so by the L\'evy-Desplanques Theorem is invertible.
As the Jacobian of $E$ is always invertible, the Jacobian of $F$
is everywhere invertible. 
By Theorem \ref{gordon}, $G$
is a diffeomorphism, hence surjective.   Since $G$ is a
diffeomorphism, $F$ is a diffeomorphism.  

Let $\Sigma(T,\Delta_{\bfa_0})= T \bullet \Delta_{\bfa_0}.$ Theorem
\ref{Fc thm} implies that any algebra in
$\calS_\Lambda(\boldR)$ is isomorphic to precisely one Lie algebra
with structure constants given by an element of the  set
$\calL_7(\boldR) \cap \Sigma(T,\Delta_{\bfa_0}).$
\end{example}

Now we prove Theorem \ref{Fc thm}.
\begin{proof} 
  Let $\alpha \in \calL_\Lambda(\boldR)$ correspond to $\bfa =
  [\alpha_{(i,j}^k]_{(i,j,k) \in \Lambda}$ and let $\beta \in
  \calL_\Lambda(\boldR)$ correspond to $\bfb =
  [\beta_{(i,j}^k]_{(i,j,k) \in \Lambda}.$ Let $\bfs$ denote the sign
  vector for $\alpha$ and let $\bft$ denote the sign vector for
  $\beta.$  We may write $\bfa$ and $\bfb$ as $ \bfa = \bfs \bullet
  |\bfa|$ and $\bfb = \bft \bullet |\bfb|,$ where  $|\bfa| =
  [|\alpha_{(i,j}^k|]_{(i,j,k) \in \Lambda}$ and $|\bfb| =
  [|\beta_{(i,j}^k|]_{(i,j,k) \in \Lambda}.$

Assume that $F_c$ maps $S$ onto $\boldR^d.$  Then $\pi_Y$ maps 
$\Ln(S)$ onto $\boldR^d.$ 
By Lemma \ref{pi lemma},  the set $\Ln(S)$ meets each
orbit of $\rho_Y$ at least once, so 
there exist $\bfa'$ and $\bfb'$ in $S$ so that 
$\Ln \bfa'$ and $\Ln |\bfa|$ are in the same $\rho_Y$ orbit
and 
$\Ln \bfb'$ and $\Ln |\bfb|$ are in the same $\rho_Y$ orbit.  
 Since  $T$ is a simple cross section for the  $\rho_{\hat Y}$ action,
there exists a unique  $\bfs'$ in $T$ so that 
$\bfs$ and $\bfs'$   are in the same $\rho_{\hat Y}$ orbit and 
there exists a unique  $\bft' $ in $T$ so that 
$\bft$ and $\bft'$   are in the same $\rho_{\hat Y}$ orbit.   Recall
that
the action of $D$ on $\calS_\Lambda(\boldR)$ 
is conjugate to the action  $\rho_{\hat Y} \times \rho_{\hat Y}$ on
$\boldZ_2^m \times (\boldR_{>0})^m.$
Therefore $\bfs \bullet |\bfa|$ and 
$\bfs' \bullet \bfa'$ are in the same $D$ orbit, and 
$\bft \bullet |\bfb|$ and $\bft' \bullet
\bfb'$ are in the same $D$ orbit.  
Since isomorphism classes in $\calS_\Lambda(\boldR)$ are $D$ orbits, 
by Theorem \ref{action-isomorphism}, 
the Lie algebras defined by $\bfa = \bfs \bullet |\bfa|$ and 
$\bfs' \bullet \bfa' \in \Sigma(T, S)$
are isomorphic, and the Lie algebras defined by 
$\bfb = \bft \bullet |\bfb|$ and $\bft' \bullet
\bfb' \in \Sigma(T, S)$
are isomorphic. 
We have shown that every element in $\calL_\Lambda(\boldR)$ is
isomorphic to  at least one element of $\Sigma(T,S).$ 

If in addition $F_c$ is a bijection, then $\Ln(S)$ is a simple cross
section for the $\rho_{Y}$ action, and $\bfs' \bullet
\bfa'$ and $\bft' \bullet
\bfb'$ are unique.   By Theorem \ref{action-isomorphism}, 
these points are isomorphic 
if and  only if $\bfs'=\bft'$ and $\bfa'  = \bfb'.$  
  Thus, the   
  Lie brackets 
$\alpha$  and $\beta$ are isomorphic if and
only  if  $\bfs' \bullet \bfa' = \bft' \bullet \bfb'.$
Thus, every element in $\calL_\Lambda(\boldR)$ is
isomorphic to precisely one element of $\Sigma(T,S).$ 
\end{proof}

\section{Aligned pairs of triples, quadruples, and $\Lambda$-subspaces}
\label{combinatorics}

\subsection{Triples and quadruples}
In this section we define some new kinds of objects: aligned pairs of
triples, quadruples of pairs of triples, and the $\Lambda$-subspace
for an index set $\Lambda.$

\begin{defn}\label{(-1)-pair} 
  Let $\bft_1=(i_1,j_1,k_1)$ and $\bft_2=(i_2,j_2,k_2)$ be triples in
  $[n]^3.$  Let $\bfy_{(i_1,j_1,k_1)}$ and
  $\bfy_{(i_2, j_2,k_2)}$ in $\boldR^n$ be the
 corresponding real root vectors  as defined in Section
 \ref{definitions}.
  We say that the triples $\bft_1$ and $\bft_2$ are an {\em
    aligned pair} if the inner product $\la \bfy_{(i_1 j_1,k_1)},  
  \bfy_{(i_2, j_2, k_2)} \ra$ of the corresponding root vectors is $-1.$
\end{defn}
It is possible that an index set $\Lambda$ has no triples that form 
an aligned pair:
\begin{example}\label{l4-b}
  Let $\Lambda \subseteq \Theta_4$ be as in Example \ref{l4}.  There
  are only two distinct triples $(1,2,3)$ and $(1,3,4)$ whose
  corresponding root vectors $\bfy_{(1,2,3)}=(1,1,-1,0)$ and
  $\bfy_{(1,3,4)}=(1,0,1,-1)$ have the matrix product $\bfy_{(1,2,3)}
  \, \bfy_{(1,3,4)}^T$
  equal to zero.  Therefore $\Lambda$ has no aligned pairs of triples.
\end{example}

It is also possible that an index set $\Lambda$ has many triples that form aligned pairs.
\begin{example}\label{example-1b} 
  Let $\Lambda \subseteq \Theta_7$ be as in Example \ref{example-1a}.
Denote the triples in $\Lambda$ by $\bft_1, \ldots,
\bft_s, \ldots, \bft_9,$ where the subscript $s$ ascends concordantly
with the dictionary ordering on $\Lambda:$
\[ \bft_1 = (1,2,3), \bft_2 = (1,3,4), \ldots, \bft_9 = (3,4,7). \]
There are five pairs of triples root vectors have inner product $-1:$
$\bft_4$ and $\bft_6;$ $\bft_2$ and $\bft_7;$ $\bft_5$ and $\bft_7;$
$\bft_3$ and $\bft_8;$ and $\bft_1$ and $\bft_9.$ 
\end{example}

Next we show that every aligned pair of triples $\bft_1$ and $\bft_2$ with
$\bft_1, \bft_2 \in \Theta_n$ 
determines a unique quadruple in $[n]^4,$ and
we assign a
sign to that quadruple.   
Let $\bft_1$ and $\bft_2$ be an  aligned pair of triples in $\Theta_n.$  As
$\bft_1$ and $\bft_2$ are in $\Theta_n,$ their root vectors are of form
$(\ldots, 1, \ldots, 1, \ldots, -1, \ldots)$ where all entries are
zero aside from those indicated.   Because $\bft_1, \bft_2 \in
\Theta_n,$ we know that entries of both triples are distinct and in
ascending order:  $i < j < k$ and either $l < k <m$ or $k < l < m.$
The  product of $\bft_1$ and $\bft_2$ is then
\begin{align*} 
-1 &= (\bfe_i + \bfe_j - \bfe_k )^T(\bfe_k + \bfe_l - \bfe_m) \\
&= \delta_{il} + \delta_{jl}   -1 \quad 
      \text{(since $i,j  < k<m;$  $l < m;$ and $k \ne l$)}. 
\end{align*}
as we have defined the root vectors over $\boldR,$
both $\delta_{il}$ and $\delta_{jl}$ are zero.  
Therefore $i \ne l$ and  $j \ne l.$ It follows that the indices
$i,j$ and $l$ are pairwise distinct.  The index $k$ is characterized
by the fact that is it the unique index occurring in both triples;
therefore $\{i,j,l,m\}$ is the symmetric difference of the sets
$\{i,j,k\}$ and $\{k,l,m\}.$   

Suppose that $\bft_1=(i_1,j_1,k_1)$ and $\bft_2=(i_2,j_2,k_2)$ form an
aligned pair of triples in $\Theta_n.$ Let $\{q_1, q_2, q_3, q_4\}$
denote the symmetric difference of the sets $\{i_1,j_1,k_1\}$ and
$\{i_2,j_2,k_2\},$ where $q_1 < q_2 < q_3 <q_4.$ One may verify that
there exists $r$ so that $\bft_1$ and $\bft_2$ are described by
 one of the six possibilities listed in Table 1.  
Note that Cases 4 and 6 cannot actually occur for a pair of triples in
$\Theta_n$ because of
hypotheses on order relations among the entries of the triples and the
quadruple.   

\begin{table}
\begin{tabular}{c c c}
Case &  Aligned pair of triples & $\sign(\bft_1, \bft
_2)$ \\
\hline 
1 & $\{ \bft_1, \bft_2 \} = \{ (q_1,q_2,r), (r,q_3,q_4)\}$ & 1 \\
2 & $\{ \bft_1, \bft_2 \} = \{ (q_1,q_2,r), (q_3,r,q_4) \}$ & -1\\
3 & $\{ \bft_1, \bft_2 \} = \{ (q_1,q_3,r), (q_2,r,q_4) \}$ & 1\\
4 & $\{ \bft_1, \bft_2 \} = \{ (q_1,q_3,r), (r,q_2,q_4) \}$ & -1 \\
5 & $\{ \bft_1, \bft_2 \} = \{ (q_2,q_3,r), (q_1,r,q_4) \}$ & -1 \\
6 & $\{ \bft_1,\bft_2 \} = \{ (q_2,q_3,r), (r,q_1,q_4) \}$ & 1 \\
 & & \\
\end{tabular}
\caption{Possible aligned pairs of triples and their signs}
\end{table}

Now we are ready to define the quadruple associated to a pair of
triples, and the sign associated to a pair of quadruples.
\begin{defn}\label{quad-sign}
  Let $\bft_1=(i_1,j_1,k_1)$ and $\bft_2=(i_2,j_2,k_2)$ be an
  aligned pair of triples in $\Theta_n.$ The {\em quadruple
    $q(\bft_1,\bft_2)$ for $\bft_1$ and $\bft_2$} is defined to be
  \[ q(\bft_1,\bft_2) = (q_1,q_2,q_3,q_4)\in [n]^4,\] where
  $\{q_1,q_2,q_3,q_4\}$ is the symmetric difference of the sets
  $\{i_1,j_1,k_1\}$ and $\{i_2,j_2,k_2\}$ and $q_1 < q_2 < q_3 < q_4.$

Define the sign $\sign(\bft_1,\bft_2) $ of $\bft_1$ and $\bft_2$ by
defining $\sign(\bft_1,\bft_2) $ to be $-1$ in Cases 2, 4 and 5 of Table
1, and defining $\sign(\bft_1,\bft_2) $ to be $1$ in Cases 1, 3 and 6.

We say that the triple $\bfs$ is a {\em common triple} for 
quadruples $\bfq_1$ and $\bfq_2$ if there are triples $\bft_1, \bft_2$
so that  $\bfq_1 = q(\bfs,\bft_1)$ and $\bfq_2 = q(\bfs,\bft_2).$

Let $Q$ be the set of quadruples associated to any aligned pairs of
triples in $\Lambda.$ We say that a quadruple in $Q$ has multiplicity
$m$ if it arises from exactly $m$ distinct pairs of triples.
\end{defn}

Note that  $q(\bft_1,\bft_2) = q(\bft_2,\bft_1) .$   

\begin{example}\label{example-1c} 
  Let $\Lambda \subseteq \Theta_7$ be as in Example \ref{example-1a}
  and \ref{example-1b}.
The quadruples
associated to the aligned pairs are
\begin{gather*}
q(\bft_4,\bft_6) = q(\bft_2,\bft_7) = (1,2,3,6) \\
q(\bft_5,\bft_7) = q(\bft_3,\bft_8) = q(\bft_1,\bft_9) = (1,2,4,7). 
\end{gather*}
The set of quadruples for $\Lambda$ is
  \[ Q = \{ (1,2,3,6), (1,2,4,7) \} .\] 
The quadruple $(1,2,3,6)$ has multiplicity two and the quadruple
 $(1,2,4,7)$ has multiplicity  three. 
The signs of the pairs are 
\begin{gather*} 
\sign(\bft_4,\bft_6) = \sign(\bft_5,\bft_7) = -1, \quad \text{and} \\
  \sign(\bft_2,\bft_7)
= \sign(\bft_3,\bft_8) = \sign(\bft_1,\bft_9) = 1.\end{gather*}
\end{example}

If there are no aligned pairs of triples in $\Lambda,$ then there are
no quadruples associated to $\Lambda.$ This is the case with Examples
\ref{l4} and
\ref{l4-b}.  It is also possible that there is only one quadruple
associated to an index set.

\begin{example}\label{1qm3-b} 
  Enumerate $\Lambda$ from Example \ref{1qm3-a} in dictionary
  order.  There are three aligned pairs: $\bft_4$ and $\bft_5,$
  $\bft_2$ and $\bft_6,$ and $\bft_1$ and $\bft_7.$ For each of these
  pairs, the associated quadruple is $(1,2,3,7).$ That quadruple has
  multiplicity three.
\end{example}

\subsection{A subspace determined by the index set}
If two aligned pairs in an index set $\Lambda$ have the same quadruple
associated to them, they determine a vector $\bfw$ defined as
follows. In Theorem \ref{-1}, it will be seen that all such vectors
are contained in the left null space of the root matrix $Y$ for
$\Lambda.$
\begin{defn}\label{w-defn}
Fix an index set 
$\Lambda \subseteq \Theta_n$ and enumerate its elements
so that $\Lambda 
 = \{ \bft_1, \ldots, \bft_m\}.$
Suppose that 
$\{\bft_{m_1}, \bft_{m_2}\}$  and   $\{\bft_{m_3}, \bft_{m_4}\}$
  are two different aligned pairs from $\Lambda$ with the same quadruple
\[  q(\bft_{m_1},\bft_{m_2}) = q(\bft_{m_3},\bft_{m_4}).\]

Define the 
$m \times 1$ column 
vector $\bfw$ in $K^m$ {\em associated to the aligned pairs $\{
  \bft_{m_1}, \bft_{m_2} \} $ and $\{ \bft_{m_3}, \bft_{m_4}\} $} to be
\[ \bfw(m_1,m_2,m_3,m_4) = \bfe_{m_1} +\bfe_{m_2} - \bfe_{m_3} -
\bfe_{m_4}. \] 
If we omit mention of the field we assume that the field is
$\boldR.$ 

Define the subspace $W_{\Lambda}(K)$ of $K^m$ to be the span of all
vectors $\bfw(m_1, m_2, m_3, m_4) $ over $K$ arising from
all aligned pairs $\{\bft_{m_1},\bft_{m_2}\}$ and
$\{\bft_{m_3},\bft_{m_4}\}$ sharing the same quadruple:
\[W_{\Lambda}(K) = \myspan_{K} \{ \bfw(m_1, m_2, m_3, m_4) \, : \,
q(\bft_{m_1}, \bft_{m_2})=q(\bft_{m_3},\bft_{m_4}) \}.\] We call
$W_\Lambda(K)$ the {\em $\Lambda$-subspace} of
$K^m.$  
\end{defn}

Note that if $\{ \bft_{m_1}, \bft_{m_2}\} $ and $\{ \bft_{m_3},
\bft_{m_4}\} $ are aligned pairs with the same quadruple,
\[  \bfw(m_1,m_2,m_4,m_3) 
= \bfw(m_1,m_2,m_3,m_4) = \bfw(m_2,m_1,m_3,m_4)\]
and
\[ \bfw(m_1,m_2,m_3,m_4) = - \bfw(m_3,m_4,m_1,m_2).\]

\begin{example}\label{example-1d}  
  Let $\Lambda$ be as in Examples \ref{example-1a}, \ref{example-1b}, 
  and \ref{example-1c}.  The aligned pairs $\{ \bft_4 , \bft_6 \}$ and
  $\{ \bft_2, \bft_7 \}$ have associated quadruple $(1,2,3,6).$ The
  vector in $\boldR^9$
  associated to these two aligned pairs is
\[ \bfw(4,6,2,7) = \bfe_4 + \bfe_6 - \bfe_2 - \bfe_7.\]

We also have (up to sign changes) three other vectors 
$\bfw(m_1,m_2,m_3,m_4)$ arising from the three other aligned pairs of triples, giving 
a total  of four vectors (up to signs) 
\begin{align*}
\bfw_1 & = \bfw(4,6,2,7)= (0,-1,0,1,   0,1,-1,0,   0)^T \\
\bfw_2 &= \bfw(5,7,3,8) = (0,0,-1,0,    1,0,1,-1,  0)^T \\
\bfw_3 &= \bfw(3,8,1,9)= (-1,0,1,0,0,0,0,1,-1)^T \\
\bfw_4 &= \bfw(1,9,5,7) = (1,0,0,0,-1,0,-1,0,1)^T .
\end{align*}
 Observe that $\bfw_1 + \bfw_2 + \bfw_3 + \bfw_4 = \bfzero$ in
$\boldR^9.$ 

The real  $\Lambda$-subspace is the three-dimensional subspace 
\[W_\Lambda(\boldR) = \myspan_\boldR \{ \bfw_1, \bfw_2, \bfw_3 \}
\subseteq \boldR^9. \]
\end{example}

The   $\Lambda$-subspace
 is always a subspace of the left null space of the root  matrix
$Y(K)$ associated to $\Lambda.$ 
\begin{thm}\label{-1}
  Let $\Lambda \subseteq \Theta_n,$ and let $W_\Lambda(K)$ be the
  $\Lambda$-subspace of $K^m$ as in Definition \ref{w-defn}.  Let
  $Y(K)$ be the $K$ root matrix for $\Lambda.$ Then $W_\Lambda(K)$ is
  a subspace of $\Null(Y(K)^T).$
\end{thm}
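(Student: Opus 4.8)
The plan is to show that each spanning vector $\bfw(m_1,m_2,m_3,m_4)$ of $W_\Lambda(K)$ lies in $\Null(Y(K)^T)$; since the null space is a subspace, this suffices. Recall that $Y(K)^T$ is the $n \times m$ matrix whose columns are the root vectors $\bfy_{(i,j,k)} = \bfe_i + \bfe_j - \bfe_k$, so for a column vector $\bfw = [w_p]_{p=1}^m \in K^m$ we have $Y(K)^T \bfw = \sum_{p=1}^m w_p \, \bfy_{\bft_p}$, a vector in $K^n$. Thus the claim $\bfw \in \Null(Y(K)^T)$ is the assertion that a certain signed sum of four root vectors vanishes.

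First I would fix two aligned pairs $\{\bft_{m_1},\bft_{m_2}\}$ and $\{\bft_{m_3},\bft_{m_4}\}$ with a common quadruple $\bfq = (q_1,q_2,q_3,q_4)$, and set $\bfw = \bfw(m_1,m_2,m_3,m_4) = \bfe_{m_1}+\bfe_{m_2}-\bfe_{m_3}-\bfe_{m_4}$. Then $Y(K)^T \bfw = \bfy_{\bft_{m_1}} + \bfy_{\bft_{m_2}} - \bfy_{\bft_{m_3}} - \bfy_{\bft_{m_4}}$, so it is enough to prove
\[ \bfy_{\bft_{m_1}} + \bfy_{\bft_{m_2}} = \bfy_{\bft_{m_3}} + \bfy_{\bft_{m_4}}. \]
The key computation is that for an aligned pair $\{\bft_1,\bft_2\}$ with quadruple $\bfq = (q_1,q_2,q_3,q_4)$ and shared index $r$ (the unique common entry of the two triples, which is cancelled in the symmetric difference), the sum $\bfy_{\bft_1} + \bfy_{\bft_2}$ depends only on $\bfq$, not on which of the six cases of Table 1 applies. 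I would verify this by a direct case check against Table 1: in Case 1, $\bfy_{(q_1,q_2,r)} + \bfy_{(r,q_3,q_4)} = (\bfe_{q_1}+\bfe_{q_2}-\bfe_r) + (\bfe_r+\bfe_{q_3}-\bfe_{q_4}) = \bfe_{q_1}+\bfe_{q_2}+\bfe_{q_3}-\bfe_{q_4}$; in Case 2, $\bfy_{(q_1,q_2,r)} + \bfy_{(q_3,r,q_4)} = (\bfe_{q_1}+\bfe_{q_2}-\bfe_r) + (\bfe_{q_3}+\bfe_r-\bfe_{q_4}) = \bfe_{q_1}+\bfe_{q_2}+\bfe_{q_3}-\bfe_{q_4}$; and similarly in Cases 3, 5 (and vacuously 4, 6, which do not occur) the $\pm\bfe_r$ terms cancel, leaving $\bfe_{q_1}+\bfe_{q_2}+\bfe_{q_3}-\bfe_{q_4}$ in every case. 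Hence $\bfy_{\bft_1}+\bfy_{\bft_2}$ equals the quadruple vector $\bfe_{q_1}+\bfe_{q_2}+\bfe_{q_3}-\bfe_{q_4}$ for any aligned pair with quadruple $\bfq$.

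Applying this to both pairs, which share the quadruple $\bfq$ by hypothesis, gives $\bfy_{\bft_{m_1}}+\bfy_{\bft_{m_2}} = \bfe_{q_1}+\bfe_{q_2}+\bfe_{q_3}-\bfe_{q_4} = \bfy_{\bft_{m_3}}+\bfy_{\bft_{m_4}}$, so $Y(K)^T\bfw = \bfzero$ and $\bfw \in \Null(Y(K)^T)$. Since $W_\Lambda(K)$ is spanned by such vectors, $W_\Lambda(K) \subseteq \Null(Y(K)^T)$, completing the argument. The only mildly delicate point is the bookkeeping in the case analysis — one must be careful that in each of Cases 1, 2, 3, 5 the cancellation of the shared-index basis vector $\bfe_r$ occurs with the right signs — but this is purely mechanical and is already half-done by the sign assignments recorded in Table 1. (Note that the observation in Example \ref{example-1d} that $\bfw_1+\bfw_2+\bfw_3+\bfw_4 = \bfzero$ is consistent with, and a special case of, this general principle: each $\bfy_{\bft_p}$ appears once with a $+$ and once with a $-$ across the four quadruple relations for that example.)
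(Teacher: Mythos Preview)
Your proof is correct and follows essentially the same approach as the paper: both verify via the case table that for any aligned pair with quadruple $(q_1,q_2,q_3,q_4)$ the sum of the two root vectors equals $\bfe_{q_1}+\bfe_{q_2}+\bfe_{q_3}-\bfe_{q_4}$, so two aligned pairs sharing a quadruple give the same root-vector sum and hence $\bfw(m_1,m_2,m_3,m_4)\in\Null(Y(K)^T)$. Your write-up is somewhat more explicit about the case-by-case cancellation of $\pm\bfe_r$, but the argument is the same.
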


\begin{proof} 
Refer to Table 1.  If the pair of triples $\{ \bft_{m_1}, \bft_{m_2}
\}$ has associated quadruple $(q_1, q_2, q_3, q_4),$ then in all six
cases
\[ \bfy_{\bft_{m_1}} + \bfy_{\bft_{m_2}} = \bfe_{q_1} + \bfe_{q_2} +
\bfe_{q_3} - \bfe_{q_4}.\] Therefore, if $\{ \bft_{m_1}, \bft_{m_2}
\}$ and $\{ \bft_{m_3}, \bft_{m_4}\}$ are two aligned pairs of triples
both having the same quadruple $(q_1, q_2, q_3, q_4),$
\[\bfy_{\bft_{m_1}} + \bfy_{\bft_{m_2}} = \bfy_{\bft_{m_3}} +
\bfy_{\bft_{m_4}}.\]

Recall that the vector $\bfy_{m_s} $ is the $m_s$th row of the root
matrix $Y(K).$ Hence the dependency
\begin{align}
 \bfy_{m_1} + \bfy_{m_2} - \bfy_{m_3} - \bfy_{m_4} = \bfzero 
\end{align}
of rows may be written as $\bfw Y(K) = 0,$ where
\[ \bfw = \bfw(m_1,m_2,m_3,m_4) = \bfe_{m_1} + \bfe_{m_2} - \bfe_{m_3} - \bfe_{m_4}.\]  
 Therefore $Y^T \bfw^T = \bfzero$ and $\bfw^T$ is in the null space of $Y.$
\end{proof}

\begin{defn} 
The subset $\Lambda$ of $\Theta_n$ is said to be {\em
null space spanning} over $K$ if the $\Lambda$-subspace 
of $K^n$  is equal to the
full null space: $W_\Lambda(K) = \Null(Y(K)^T).$
\end{defn}

We revisit Example \ref{1qm3-a}.
\begin{example}\label{1qm3-c} Let $\Lambda$ be as in Examples
\ref{1qm3-a} and \ref{1qm3-b}.  We saw that the vectors $\bfw_1 =
(0,1,0,-1,-1,1,0)^T $ and $\bfw_2 = (1,0,0,-1,-1,0,1)$ spanned
$\Null(Y^T).$ But these vectors are just the vectors determined by the
aligned pairs of triples we saw in Example \ref{1qm3-b}: $\bfw_1 =
\bfw(2,6,4,5)$ and $\bfw_2 = \bfw(1,7,4,5).$ Hence $\Lambda$ is
null space spanning.
\end{example} 

We leave it to the reader to verify that
the index sets $\Lambda$ in Example \ref{1qm2-a}
and  Example \ref{example-1a} are null space spanning.  Not all
index sets are null space spanning, as the following example from
dimension eight shows.

\begin{example}\label{not null spanning} For the subset
$\Lambda$ of $\Theta_8$ defined by
\begin{multline*} \Lambda = \{ (1,2,4), (1,3,5), (1,4,6), (1,5,7),
(1,7,8),\\ (2,3,6), (2,4,7), (2,6,8), (3,5,8) \},\end{multline*} the
only quadruple is $(1,2,4,8).$ It has multiplicity two, arising
from the two aligned pairs of triples $\{ \bft_5,\bft_7\} = \{(1,7,8),(2,4,7)\}$ and
$\{\bft_3,\bft_8\}
= \{(1,4,6),(2,6,8)\}.$ However, the span of $\bfw_1 = \bfe_5 + \bfe_7
- \bfe_3 -\bfe_8$ is not the full left null space of the associated
root matrix $Y.$ Actually, $\Null(Y^T)$ is spanned by $\bfw_1$ and
$\bfw_2 = (1,0,1,-1,-1,-1,0,0,1)^T.$
\end{example}

\begin{remark} 
Even in the case that $\Lambda$ is not
null space spanning, the vectors $\bfw_i$ may still be used
as part of a basis for the tangent space.  As they have all entries
of zero except for two 1's and two -1's, a basis including them may 
be simpler than a full basis found by a computer algebra system.  
\end{remark}

\subsection{Criterion for injectivity}
Recall that in Example \ref{example-1a}, the Jacobian matrix of the
mapping $F$ was nonsingular because it was diagonally dominant.  For
a general index set $\Lambda,$ one can extract a condition on the combinatorics
of the set of quadruples that makes the Jacobian matrix of the mapping
$F$ diagonally dominant.

\begin{lemma}\label{main lemma} 
  Let $\Lambda\subseteq \Theta_n$ be an index set of cardinality $m$
  with associated root matrix $Y.$ Suppose that $\Lambda$ is null
  space spanning and that $\calB = \{ \bfw_1, \bfw_2, \ldots,
  \bfw_d\}$ is a basis for $\Null(Y^T),$ where for $i=1, \ldots, d,$
  the vector $\bfw_i$ arises from the quadruple $q_i=
  (t_1^i,t_2^i,t_3^i,t_4^i) \subseteq [n]^3:$
\[ \bfw_i = \bfw(t_1^i,t_2^i,t_3^i,t_4^i) = \bfe_{t_1^i} +\bfe_{t_2^i}
-\bfe_{t_3^i} -\bfe_{t_4^i}.\]
Suppose that the set of quadruples for $\Lambda$ 
satisfies the following conditions.
\begin{enumerate}
\item{For each quadruple $q_i,$ there is at least one element from
the list $t_1^i,t_2^i,t_3^i,t_4^i$ that does not occur in any of the other
quadruples. }\label{quadhyp1}
\item{For each quadruple $q_i,$ each element $t_1^i,t_2^i,t_3^i,t_4^i$
of the quadruple occurs at most once in all of the other quadruples.}\label{quadhyp2}
\end{enumerate}
Let $\bfa(0) \in (\boldR_{>0})^m.$ For $\bfs=(s_1, \ldots, s_m) \in
\boldR^m,$ let $\bfa(\bfs) = \bfa(0) + \sum_{i = 1}^d s_i \bfw_i .$
and let  $\Delta_{\bfa_0} = \{ \bfa(\bfs) \, : \, \bfs \in \boldR^m\} \cap
(\boldR_{>0})^m.$ Then $\Ln(\Delta_{\bfa_0})$ is a simple cross section for the $\rho_Y$ action.
\end{lemma}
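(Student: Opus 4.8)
The plan is to invoke Lemma \ref{pi lemma}: it suffices to show that the map $F_1 = \pi_Y \circ \Ln \circ \bfa : D \to \boldR^d$, where $D$ is the (bounded, convex) domain on which $\bfa(\bfs) \in (\boldR_{>0})^m$, is a bijection. By Theorem \ref{gordon}, since $D$ is a connected open convex subset of $\boldR^d$ and $\boldR^d$ is simply connected, it is enough to prove that $F_1$ is proper and has everywhere nonvanishing Jacobian determinant. First I would record the explicit form of $F_1$: its $i$th coordinate is $\Ln(\bfa(\bfs)) \cdot \bfw_i = \ln a_{t_1^i}(\bfs) + \ln a_{t_2^i}(\bfs) - \ln a_{t_3^i}(\bfs) - \ln a_{t_4^i}(\bfs)$, where $a_\ell(\bfs)$ is the $\ell$th coordinate of $\bfa(0) + \sum_j s_j \bfw_j$, an affine function of $\bfs$.

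The key computation is the Jacobian. Since $\partial a_\ell / \partial s_j = (\bfw_j)_\ell \in \{-1,0,1\}$, the $(i,j)$ entry of the Jacobian is
\[
\frac{\partial (F_1)_i}{\partial s_j} = \sum_{\ell} \frac{(\bfw_i)_\ell \,(\bfw_j)_\ell}{a_\ell(\bfs)}.
\]
The diagonal entry $(i,i)$ is $\sum_\ell (\bfw_i)_\ell^2 / a_\ell(\bfs) = \sum_{\ell \in q_i} 1/a_\ell(\bfs)$, a sum of four strictly positive terms. The off-diagonal entry $(i,j)$ with $i \ne j$ is a sum over indices $\ell$ lying in both quadruples $q_i$ and $q_j$; by hypothesis \eqref{quadhyp2} each element of $q_i$ occurs at most once among all other quadruples, so $q_i$ and $q_j$ share at most one index, and hence $|\,(F_1)_i{}_{,j}'\,| \le 1/a_\ell(\bfs)$ for that single shared $\ell$ (if any). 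Now hypothesis \eqref{quadhyp1} supplies, for each $i$, an index $t^i_\star \in q_i$ occurring in no other quadruple; the term $1/a_{t^i_\star}(\bfs)$ appears in the diagonal entry but in no off-diagonal entry of row $i$. Therefore
\[
\Bigl| \tfrac{\partial (F_1)_i}{\partial s_i} \Bigr|
\;-\; \sum_{j \ne i} \Bigl| \tfrac{\partial (F_1)_i}{\partial s_j} \Bigr|
\;\ge\; \frac{1}{a_{t^i_\star}(\bfs)} \;>\; 0,
\]
so the Jacobian is strictly diagonally dominant at every $\bfs \in D$, hence invertible by the L\'evy--Desplanques Theorem (exactly as in Example \ref{example-1a}).

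For properness I would argue as in Example \ref{example-1a}: if $\bfs_k \to \partial D$, then some coordinate $a_\ell(\bfs_k) \to 0^+$ (the defining inequalities of $D$ are precisely $a_\ell > 0$), while the others stay bounded (on the bounded set $\overline D$). Pick $\ell$ with $a_\ell(\bfs_k) \to 0$ and some quadruple $q_i \ni \ell$; then the coordinate $(F_1)_i(\bfs_k)$ has exactly one of its four $\pm\ln a_\bullet$ summands tending to $\pm\infty$ and the rest bounded, so $|(F_1)_i(\bfs_k)| \to \infty$, and $F_1$ maps a neighborhood of $\partial D$ outside every compact subset of $\boldR^d$; equivalently the preimage of any compact set is a closed bounded subset of $D$, hence compact. (Here I use that since $\Lambda$ is null space spanning the $\bfw_i$ really are a basis of $\Null(Y^T)$, so $d = \dim \Null(Y^T)$ and the hypotheses of Theorem \ref{gordon} on dimensions match.) By Theorem \ref{gordon}, $F_1$ is a diffeomorphism, in particular a bijection, so Lemma \ref{pi lemma} gives that $\Ln(\Delta_{\bfa_0})$ is a simple cross section for $\rho_Y$. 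The main obstacle is the bookkeeping in the diagonal-dominance step: one must track carefully how often each index of a quadruple can recur elsewhere, which is exactly what conditions \eqref{quadhyp1} and \eqref{quadhyp2} are engineered to control, and one must be slightly careful that "shares at most one index" combined with the $\pm 1$ pattern really does bound each off-diagonal entry by a single $1/a_\ell$ rather than a signed cancellation that could be larger.
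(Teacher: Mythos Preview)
Your approach matches the paper's: compute the Jacobian of $F_1=\pi_Y\circ\Ln\circ\bfa$ and show strict diagonal dominance via hypotheses \eqref{quadhyp1}--\eqref{quadhyp2}, then invoke L\'evy--Desplanques. One bookkeeping slip: hypothesis \eqref{quadhyp2} does \emph{not} say that $q_i$ and $q_j$ share at most one index (with $d=2$ they may share up to three). What it says is that each fixed $\ell\in q_i$ lies in at most one other $q_j$; hence in the row sum $\sum_{j\ne i}\bigl|\partial(F_1)_i/\partial s_j\bigr|\le\sum_{j\ne i}\sum_{\ell\in q_i\cap q_j}1/a_\ell$ each term $1/a_\ell$ occurs at most once, and by \eqref{quadhyp1} the term for $\ell=t^i_\star$ is absent. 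This yields your displayed inequality directly, and is precisely how the paper argues.

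Your properness step goes beyond the paper, whose proof in fact stops after establishing nonsingularity of the Jacobian and does not explicitly invoke Theorem~\ref{gordon}; in that respect your write-up is more complete. However, the sketch has a gap: you assert that for the chosen $q_i\ni\ell$ \emph{exactly one} of the four summands $\pm\ln a_\bullet$ diverges while the other three stay bounded. Nothing prevents several of the $a_\bullet$ with $\bullet\in q_i$ from vanishing simultaneously as $\bfs_k$ approaches a face of $\partial D$ of codimension $\ge 2$, and then the $\pm\infty$ contributions to $(F_1)_i$ may cancel. A correct argument, in the style of Example~\ref{example-1a}, must then pass to another coordinate $(F_1)_j$ and use the hypotheses to locate one in which the divergent term is not offset.
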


\begin{proof} 
Denote the coordinate functions of $\bfa(\bfs)$ by $a_1, \ldots, a_d.$
Let $D$ denote the set of values for $\bfs$ which parametrize
$\Delta_{\bfa_0}:$
\[ D =  \{ \bfs \, :  \, a_i(\bfs) > 0 \enskip \text{for all} \enskip
i=1, \ldots, d \} .\]
For $i=1,
\ldots, d,$ the function $a_i$ is given by 
\[ a_i(\bfs) = a_i(0) + \sum_{i \in \{ t_1^k, t_2^k\}} s_k - \sum_{i
\in \{ t_3^k, t_4^k\}} s_k ,\] 
so  for $\bfs \in D,$
\[\ln ( a_i(\bfs)) = \ln \left(a_i(0) + \sum_{i \in \{ t_1^k, t_2^k\}} s_k -
\sum_{i \in \{ t_3^k, t_4^k\}} s_k \right). \] 
The partial derivatives of the functions $a_i$ with respect to $s_j$ are 
\[ \frac{\partial a_i}{\partial s_j} =
\begin{cases} 
0 & i \not \in \{ t_1^j, t_2^j, t_3^j, t_4^j\} \\ 
1 & i  \in \{ t_1^j, t_2^j\} \\ 
-1 & i  \in \{ t_3^j, t_4^j\}.
\end{cases}\] Fix $i.$ The $i$th coordinate function of $F = \pi_Y
\circ \Ln \circ \bfa $ is
\begin{align*}
 F_i(\bfs) &= (\pi_Y \circ \Ln \circ \bfa)_i (\bfs)  \\
&= \bfw_i \cdot  (\pi_Y \circ\Ln \circ \bfa) (\bfs)\\
&= \ln(a_{t_1^i} (\bfs) )
+\ln(a_{t_2^i} (\bfs) )- \ln(a_{t_3^i} (\bfs) ) -\ln(a_{t_4^i} (\bfs)
).\end{align*}
Then 
\[ \frac{\partial F_i}{\partial s_i} = \sum_{k \in
\{t_1^i,t_2^i,t_3^i,t_4^i\}} 
\frac{\partial F_i}{\partial a_k}\frac{\partial a_k}{\partial s_i}=  
\sum_{k \in
\{t_1^i,t_2^i,t_3^i,t_4^i\}} \frac{1}{a_{k}(\bfs)}, \] 
while for $i \ne j,$
\[ \frac{\partial F_i}{\partial s_j} = \sum_{k \in
  \{t_1^i,t_2^i,t_3^i,t_4^i\}} \frac{\partial F_i}{\partial
  a_k}\frac{\partial a_k}{\partial s_j} = \sum_{k \in
  \{t_1^i,t_2^i,t_3^i,t_4^i\} \cap \{t_1^j,t_2^j,t_3^j,t_4^j\} }
\frac{1}{a_{k}(\bfs)} . \] Therefore the sum of the nondiagonal
entries in row $i$ of the Jacobian matrix for $F$ is
\[ \sum_{i \ne j }  \frac{\partial F_i}{\partial s_j}=  \sum_{i \ne j }\sum_{k \in \{t_1^i,t_2^i,t_3^i,t_4^i\} \cap
\{t_1^j,t_2^j,t_3^j,t_4^j\} } \frac{1}{a_{k}(\bfs)} .\] 

If it is $t_3^i$ or $t_4^i$ which satisfies hypotheses
\eqref{quadhyp1} of the lemma, we can replace $\bfw_i=
\bfw(t_1^i,t_2^i,t_3^i,t_4^i)$ by $-\bfw_i=
\bfw(t_3^i,t_4^i,t_1^i,t_2^i)$ in the parametrization without changing
whether or not the Jacobian is nonzero.  Then since
$\bfw(t_1^i,t_2^i,t_3^i,t_4^i)=\bfw(t_2^i,t_1^i,t_3^i,t_4^i)$ we may
assume without loss of generality that $t_1^i \not \in
\{t_1^j,t_2^j,t_3^j,t_4^j\}$ for all $j \ne i.$

By hypothesis \eqref{quadhyp2}, 
\[ \sum_{i \ne j }  \frac{\partial F_i}{\partial s_j} \le  
\sum_{k \in 
\{t_2^1,t_3^1,t_4^1\} } \frac{1}{a_{k}(\bfs)} <\frac{\partial
F_i}{\partial s_i} 
.\] 
Thus, the Jacobian is diagonally dominant.  Therefore by the
L\'evy-Desplanques Theorem, it is nonsingular.  
\end{proof}

\section{The Jacobi Identity}\label{jacobi identity}

The following theorem from \cite{payne-09b} reformulates
the  Jacobi Identity in terms of structure constants in
the case that the basis is triangular.  We have rephrased the
hypotheses of the theorem using the language of aligned pairs and
associated quadruples.  

\begin{thm}[Theorem 7, \cite{payne-09b}] \label{jacobicondition} 
Let
$\calB = \{x_i\}_{i=1}^n$ be a triangular basis for\/ $\boldR^n.$
Let the vector\/ $[\alpha_{ij}^k]_{(i,j,k) \in \Lambda}$ 
of nonzero structure constants indexed by\/ $\Lambda
\subseteq \Theta_n,$ together with the basis $\calB,$ define a
skew-symmetric product $\mu: \boldR^n \times \boldR^n \to \boldR^n.$

The product $\mu$ defines a Lie algebra if and only if, whenever there
exists an aligned pair of triples $\bft_p$ and $\bft_r$ in $\Lambda$
with associated quadruple $(i,j,k,m)$ then the equality
\begin{equation}\label{jacobi-constraint} \sum_{s < m} \alpha_{ij}^s
\alpha_{sk}^m + \alpha_{jk}^s \alpha_{si}^m + \alpha_{ki}^s
\alpha_{sj}^m = 0
\end{equation} holds.  Furthermore, a term of form $\alpha_{ij}^l
\alpha_{lk}^m$ in Equation \eqref{jacobi-constraint} is nonzero if and
only if the corresponding triples $ (i,j,l)$ or $(j,i,l),$ and $
(l,k,m)$ or $(k,l,m),$ are an aligned pair.
\end{thm}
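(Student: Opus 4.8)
The plan is to expand the Jacobi expression $J(x,y,z)=\mu(\mu(x,y),z)+\mu(\mu(y,z),x)+\mu(\mu(z,x),y)$ on basis vectors and read off, coordinate by coordinate, which of the resulting scalar equations are genuine constraints. First I would note that $J$ is trilinear and, since $\mu$ is skew-symmetric, alternating in its three arguments; hence $J$ vanishes automatically when two arguments coincide, and $\mu$ satisfies the Jacobi Identity if and only if $J(x_i,x_j,x_k)=\bfzero$ for all $1\le i<j<k\le n$. Writing $\mu(x_a,x_b)=\sum_c\alpha_{ab}^c x_c$, with the structure constants extended skew-symmetrically in the lower indices ($\alpha_{ba}^c=-\alpha_{ab}^c$), the coefficient of $x_m$ in $J(x_i,x_j,x_k)$ is $\sum_{s=1}^n\bigl(\alpha_{ij}^s\alpha_{sk}^m+\alpha_{jk}^s\alpha_{si}^m+\alpha_{ki}^s\alpha_{sj}^m\bigr)$. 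So the Jacobi Identity is equivalent to the vanishing of all of these sums, one for each $(i,j,k,m)$ with $i<j<k$ and $m\in[n]$.

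Next I would invoke triangularity of $\calB$ — equivalently $\Lambda\subseteq\Theta_n$, i.e. $\alpha_{ab}^c\ne 0\Rightarrow c>\max(a,b)$ — to prune these equations. In a term $\alpha_{ij}^s\alpha_{sk}^m$, nonvanishing forces $s>j$, $s\ne k$, and $m>\max(s,k)$, so in particular $k<m$ and $s<m$; the same inequality $s<m$ falls out of $\alpha_{jk}^s\alpha_{si}^m$ and of $\alpha_{ki}^s\alpha_{sj}^m$. Hence the coefficient of $x_m$ in $J(x_i,x_j,x_k)$ is identically zero when $m\le k$, and when $k<m$ it is the left-hand side of Equation \eqref{jacobi-constraint} for the $4$-tuple $(i,j,k,m)$. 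So the Jacobi Identity is equivalent to Equation \eqref{jacobi-constraint} holding for every $(i,j,k,m)$ with $i<j<k<m$.

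Then I would prove the \emph{furthermore} clause, which simultaneously completes the reduction to quadruples. Fix $i<j<k<m$ and examine a term $\alpha_{ij}^l\alpha_{lk}^m$ of \eqref{jacobi-constraint} (the families $\alpha_{jk}^l\alpha_{li}^m$ and $\alpha_{ki}^l\alpha_{lj}^m$ are handled by the identical computation, each term pairing a triple built from two of $i,j,k$ and the summation index $l$ against a triple built from $l$, the remaining index among $i,j,k$, and $m$). The term is nonzero exactly when the sorted triples arising from $(i,j,l)$ and from $(l,k,m)$ both lie in $\Lambda$; and when it is, the inequalities forced above, $l>j$, $l\ne k$, $\max(l,k)<m$, together with $i<j<k<m$, give $l\notin\{i,j,k,m\}$ and $\{i,j\}\cap\{k,m\}=\emptyset$, so the only index shared by the two triples is $l$, which appears with coefficient $-1$ in the root vector $\bfe_i+\bfe_j-\bfe_l$ and $+1$ in the root vector $\bfe_l+\bfe_k-\bfe_m$; their inner product is therefore exactly $-1$, so the triples form an aligned pair (Definition \ref{(-1)-pair}), and since the symmetric difference of $\{i,j,l\}$ and $\{l,k,m\}$ is $\{i,j,k,m\}$ their quadruple is $(i,j,k,m)$. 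Conversely, if those sorted triples lie in $\Lambda$ then both structure constants are nonzero and so is the term; this is the \emph{furthermore} statement. Putting this together with the second step, Equation \eqref{jacobi-constraint} for a $4$-tuple $(i,j,k,m)$ is a nontrivial constraint exactly when $(i,j,k,m)$ is the quadruple of some aligned pair of triples in $\Lambda$ (for such a pair the case analysis preceding Definition \ref{quad-sign} shows its two triples have precisely the form above, hence contribute a nonzero term), and aligned pairs sharing a quadruple give the same Equation \eqref{jacobi-constraint}. Therefore $\mu$ satisfies the Jacobi Identity if and only if \eqref{jacobi-constraint} holds for the quadruple of every aligned pair in $\Lambda$, which is the claim.

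The part needing care is the bookkeeping in the second and third steps: extracting the strict inequalities triangularity forces on the summation index and the component index in each of the three families of terms, and then checking, across the subcases, that $l$ is the unique index common to the two triples so that the root-vector inner product is $-1$ exactly and not $0$ or $1$. None of it is deep, but it is easy to miss a case; organizing the three families uniformly as above is what keeps the verification short.
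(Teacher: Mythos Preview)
Your argument is correct. Note, however, that the paper does not actually supply a proof of this theorem: it is quoted as Theorem~7 of \cite{payne-09b}, with the remark that the proof there (over $\boldR$) carries over to arbitrary fields, and the subsequent proof block in the paper is for Theorem~\ref{ji-quads-signs}, not for this statement. So there is no ``paper's own proof'' to compare against here. Your write-up is the standard direct verification---expand $J(x_i,x_j,x_k)$ in the basis, use $\Lambda\subseteq\Theta_n$ to force $s<m$ and $m>k$ in every surviving term, and then identify nonvanishing terms with aligned pairs via the root-vector inner product---and it is exactly the kind of argument one would expect in \cite{payne-09b}. The bookkeeping you flag (that $l\notin\{i,j,k,m\}$ in each of the three families, so the shared index contributes the sole $-1$ to the inner product) is handled correctly.
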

Although Theorem \ref{jacobicondition} is proved over $\boldR$ in 
\cite{payne-09b}, it is remains true over arbitrary fields.
Now we prove Theorem \ref{ji-quads-signs}.

\begin{proof}

  We need to see how to rewrite Equation \eqref{jacobi-constraint} of
Theorem \ref{jacobicondition}, in which the subscripts $i,j$ and $k$
are ordered cyclically in each summand, to a convention in which all
the terms in the sum are of the form $\alpha_{ij}^k \alpha_{kl}^m$
with $i < j<k$ and $k<l<m.$

  Suppose that $(q_1,q_2,q_3,q_4)$ is in the set of quadruples for
$\Lambda.$ Then $q_1 < q_2 < q_3 < q_4.$ Suppose that triples $\bft_1$
and $\bft_2$ are an aligned pair with quadruple $(q_1,q_2,q_3,q_4).$
There is an $r \in [n]$ so that one of the triples is in the set $\{
(q_1,q_2,r), (q_1,q_3,r), (q_2,q_3,r)\}.$ 

 We need to see whether the corresponding nonvanishing term of
Equation \eqref{jacobi-constraint} changes sign if we re-order the
lower subscripts so they are all in ascending order.  There are six
cases, as listed in Table 2, to consider.  In each case, the
left side of the equality in the third column 
expresses term from the Jacobi Identity so
that the subscripts come from the triples as in Equation
\eqref{ji-qt}, and on the right side, subscripts are in a cyclic form
of $(q_1, q_2, q_3)$ as in Equation \eqref{jacobi-constraint}.
In each case, the sign change is determined by the value of 
$\sign(\bft_1,\bft_2)$
given in Table 1.  By making appropriate substitutions, we get
Equation \eqref{ji-qt}.
\begin{table}
\begin{tabular}{c c c} Case & Aligned pair of triples & Product \\
\hline 
1 & $\{ \bft_1, \bft_2 \} = \{ (q_1,q_2,r), (r,q_3,q_4)\} $ &
$\alpha_{q_1 q_2}^r \alpha_{r q_3}^{q_4} = \alpha_{q_1 q_2}^{r}
\alpha_{r q_3}^{q_4} $\\ 
2 & $\{ \bft_1, \bft_2 \} = \{ (q_1,q_2,r),
(q_3,r,q_4) \}$ & $\alpha_{q_1 q_2}^r \alpha_{q_3 r}^{q_4} = -
\alpha_{q_1 q_2}^{r} \alpha_{r q_3}^{q_4} $\\ 
3 &$\{ \bft_1, \bft_2 \}
= \{ (q_1,q_3,r), (q_2,r,q_4) \} $ & $\alpha_{q_1 q_3}^r \alpha_{q_2
r}^{q_4} = \alpha_{q_3 q_1}^{r} \alpha_{r q_2}^{q_4}$\\ 
4& $\{ \bft_1,
\bft_2 \} =\{ (q_1,q_3,r), (r,q_2,q_4) \} $ & $\alpha_{q_1 q_3}^r
\alpha_{r q_2}^{q_4} = - \alpha_{q_3 q_1}^{r} \alpha_{r q_2}^{q_4} $\\
5& $\{ \bft_1, \bft_2 \} = \{ (q_2,q_3,r), (q_1,r,q_4) \}$ &
$\alpha_{q_2 q_3}^r \alpha_{q_1 r}^{q_4} = - \alpha_{q_2 q_3}^{r}
\alpha_{r q_1}^{q_4} $\\ 
6& $\{ \bft_1, \bft_2 \} = \{ (q_2,q_3,r),
(r,q_1,q_4) \}$ & $\alpha_{q_2 q_3}^r \alpha_{r q_1}^{q_4} =
\alpha_{q_2 q_3}^{r} \alpha_{r q_1}^{q_4} $\\ &&\\
\end{tabular}
\caption{Sign changes in the Jacobi Identity}
\end{table}

\end{proof}

Corollaries \ref{JI automatic} and \ref{obstruction} follow
immediately from Theorem \ref{ji-quads-signs}.  In the first case,
when there are no quadruples, there are no constraints from the Jacobi
Identity.  In the second case, the Jacobi Identity becomes one
equation of form $ \alpha_{i_p j_p}^{k_p} \alpha_{i_r j_r}^{k_r} =0,$
which has no nonzero solutions in the field $K.$

We illustrate Theorem \ref{ji-quads-signs} with some examples.

\begin{example}\label{l4-c}
Let $\Lambda$ be as in Examples \ref{l4} and \ref{l4-b}.  Since there
are no quadruples for $\Lambda,$ the Jacobi Identity automatically
holds for all elements of $\calS_\Lambda(K)$ by Corollary \ref{JI
automatic}.  We may now conclude the algebra in
$\calS_\Lambda(\boldR)$ with $\alpha_{12}^3 = 1$ and $\alpha_{13}^4 =
1$ is a Lie algebra, and all Lie algebras in $\calS_\Lambda(\boldR)$
are isomorphic to it.
\end{example}

\begin{example}\label{1qm2-b} Let $\Lambda$ be as in Example
\ref{1qm2-a}.  We have two pairs of aligned triples, $\{ \bft_1,
\bft_6\} = \{(1,2,4),(3,4,7)\}$ and $\{ \bft_2,\bft_5\} = \{ (1,3,5),
(2,5,7) \}$ both having quadruple $(1,2,3,7).$ The signs of the pairs
are $\sign(\bft_1,\bft_6) =-1$ and $\sign(\bft_2,\bft_5) =1.$ By
Theorem \ref{ji-quads-signs}, the Jacobi Identity for
$\calS_\Lambda(\boldR)$ is equivalent to the equation
\[ - \alpha_{12}^4 \alpha_{34}^7 + \alpha_{13}^5 \alpha_{25}^7 = 0. \]
In Example \ref{1qm2-a}, we found that the set $\Sigma(T,\Delta)$ in
\eqref{1qm2-a rep set} was a parametrizing set for
$\calS_\Lambda(\boldR).$ To determine which algebras among these
satisfy the Jacobi Identity, we may solve
\[ |\alpha_{12}^{4} \alpha_{34}^7| = |\alpha_{13}^5
\alpha_{25}^7|, \quad \sgn(\alpha_{12}^{4} \alpha_{34}^7 ) =
\sgn(\alpha_{13}^5 \alpha_{25}^7) \] for elements of
$\Sigma(T,\Delta).$ Substituting the expression from Equation
\eqref{1qm2-a rep set} for an element of $\Sigma(T,\Delta)$ into the
first equation gives
$1+s = 1-s.$ Hence, $s=0.$ The sign vector must be
$(0,0,0,0,0,0)$ in order for the second equation to be true.  We
conclude that every Lie algebra in $\calS_\Lambda(\boldR)$ is
isomorphic to the one with structure constants $\alpha_{ij}^k = 1$ for
all $(i,j,k) \in \Lambda.$
\end{example}

The next example is of the type described by Part \ref{1qm3} of Theorem \ref{analyze-exhaustive}. 
\begin{example}\label{2qm2b-a} 
Let
\[ \Lambda= \{(1,3,4), (1,4,6), (1,6,7), (1,7,8), (2,3,6), (2,4,7),
(2,6,8), (3,5,8) \}.\]
There are four aligned pairs. The pairs 
\[ \{\bft_1, \bft_6 \} = \{(1,3,4), (2,4,7)\} \enskip \text{and}  
\enskip \{\bft_3, \bft_5 \} = \{
(1,6,7), (2,3,6) \}\] have the quadruple $(1,2,3,7)$ associated to
them, while 
\[ \{\bft_2,\bft_7\} = \{(1,4,6), (2,6,8)\} \enskip  \text{and} \enskip  
\{\bft_4,\bft_6 \} = \{ (1,7,8), (2,4,7)\}\]have
the quadruple $(1,2,4,8)$ associated to them.  Accordingly, the
vectors
\begin{align*} \bfw_1 &= (1,0,-1,0,-1,1,0,0)^T , \text{and} \\ \bfw_2
&= (0,1,0,-1, 0,-1,1,0)^T
\end{align*} are in $\Null(Y^T)$ as guaranteed by  Proposition
\ref{-1}.  The null space of $Y^T$ is two-dimensional so $\Lambda$ is
null space spanning.

For $s, t \in \boldR,$ let
\[ \bfa(s,t) = (1,1,1,1,1,1,1,1)^T+s\bfw_1 + t\bfw_2.
\] The simple cross section for the $\rho_{\hat Y}$ action of
$\boldZ_2^n$ on $\boldZ_2^m$ has only one element, $\bfzero \in \boldZ_2^8.$
Methods from Section \ref{cross sections} may be used to show that
every Lie algebra in $\calS_\Lambda(\boldR)$ is isomorphic to exactly
one Lie algebra in
\begin{equation}\label{soln-2} \Sigma(T,\Delta) = \{
(1+s,1+t,1-s,1-t,1-s,1+s-t,1+t,1) \, : \, (s,t) \in D \}
,\end{equation} where
\[ D = \{ (s,t) \, : \, |s| < 1, |t| < 1, t < 1 + s \}. \] 
  By Theorem \ref{jacobicondition}, the Jacobi Identity for elements of products
$\mu$ in $\calS_\Lambda(\boldR)$ is equivalent to the system of equations
\[
 \alpha_{13}^4 \alpha_{24}^7 - \alpha_{16}^7
\alpha_{23}^6 = 0,  \quad \alpha_{14}^6
\alpha_{26}^8 - \alpha_{17}^8 \alpha_{24}^7 = 0. \]
Equivalently,  the absolute values of the structure
constants satisfy 
\begin{equation}\label{mag-constraint}
|\alpha_{13}^4 \alpha_{24}^7| = |\alpha_{16}^7 \alpha_{23}^6|
\quad \text{and} \quad 
 |\alpha_{14}^6 \alpha_{26}^8| = |\alpha_{17}^8
\alpha_{24}^7|\end{equation}  while simultaneously, their signs
 satisfy
\begin{equation}\label{signs are} \sgn(\alpha_{13}^4 \alpha_{24}^7)=
\sgn(\alpha_{16}^7 \alpha_{23}^6), \enskip \text{and} \enskip
\sgn(\alpha_{14}^6 \alpha_{26}^8) = \sgn(\alpha_{17}^8 \alpha_{24}^7).
\end{equation}
Since the sign vector $\bfzero \in T$ has all zero entries, all signs
are positive for points in the simple cross section, and the
equalities in \eqref{signs are} in hold.  Substituting the values of
$\alpha_{ij}^k, (i,j,k) \in \Lambda,$ from \eqref{soln-2} into Equation 
\eqref{mag-constraint} yields
\begin{align*} (1+s)(1+s - t) &= (1-s)^2 \\ (1+t)^2&=
(1-t)(1+s-t) .\end{align*} It is not hard to show that the only
solution yielding positive values for the squares of the structure
constants is $s=t=0.$  Substituting $s=t=0$ into Equation
\eqref{soln-2} gives
\[  \Sigma(T,\Delta) \cap \calL_\Lambda(\boldR) = \{ (1,1,1,1,1,1,1,1)\}.\]
Thus, every Lie algebra in $\calL_\Lambda(\boldR)$ 
is isomorphic to the one with the structure
constants
\[ [\alpha_{ij}^k]_{(i,j,k) \in \Lambda} = (1,1,1,1,1,1,1,1).\]
\end{example}

\begin{example}\label{1qm3-d} Let $\Lambda$ be as in Examples
\ref{1qm3-a}, \ref{1qm3-b}, and \ref{1qm3-c}. 
 We have established  that each Lie   algebra in $\calS_\Lambda(\boldR)$ is represented
exactly once in the set of points 
\[\Sigma(T, \Delta_{\bfa_0}) = \{ \bfa(s,t) \, : \, s,t \in
D\}, \] 
where the vector $\bfa(s,t) = [\alpha_{ij}^k]_{(i,j,k) \in
  \Lambda}$ is equal to  
 \[ \left(1+t,2+s,
1,1-s-t, 1-s-t, \pm(2+s), \pm( 1+t) \right)^T  \]  and 
\[ D = \{ (s,t) \, : \, s > -2, t > -1, \text{and} \, s+t < 1 \}
\subseteq \boldR^2.  \]
The set $\Sigma(T, \Delta_{\bfa_0}^{1/2})$ of points of form
\[
 \left((1+t)^{1/2},(2+s)^{1/2}, 1,(1-s-t)^{1/2}, (1-s-t)^{1/2},
   \pm(2+s)^{1/2}, \pm( 1+t)^{1/2} \right)^T  ,\]
with $(s,t) \in D,$
is a second parametrizing set.  

 By Theorem \ref{ji-quads-signs}, the Jacobi
 Identity for $\calS_\Lambda(\boldR)$ 
is equivalent to
\[ \alpha_{13}^5 \alpha_{25}^7 - \alpha_{23}^6 \alpha_{16}^7 -
\alpha_{12}^4 \alpha_{34}^7 = 0 . \] 
Note that if the signs are given by the sign vector $\bfe_6 \in
\boldZ_2^7,$  all terms on the left side of the equation are negative,
so there are no solutions.  Hence the sign vector for a Lie algebra in
$\Sigma(T, \Delta_{\bfa_0})$ or $\Sigma(T, \Delta_{\bfa_0}^{1/2})$ must be in 
\[  T_1 = \{(0,0,0,0,0,0,0),(0,0,0,0,0,0,1),(0,0,0,0,0,1,1)\}. \]

First we consider the Jacobi Identity for products in 
$\Sigma(T, \Delta_{\bfa_0}^{1/2}):$ 
\begin{multline}\label{JI 1qm3-d} -\sign(\alpha_{13}^5) \sign
(\alpha_{25}^7) (2 + s) +\\ \sign(\alpha_{23}^6) \sign (\alpha_{16}^7)
(1-s-t) + \sign(\alpha_{12}^4)\sign (\alpha_{34}^7) (1 + t) =
0.\end{multline}

There are now three cases to consider, one for each component of 
$\Sigma(T_1,\Delta^{1/2}).$
\begin{itemize}

\item{If sign vector is $(0,0,0,0,0,0,0),$ then all signs are positive,
and Equation \eqref{JI 1qm3-d} becomes
\[ 0=-(2+s) +(1-s-t) + (1+t) = -2s, \] which has solutions
$s=0, -1 < t < 1$ in $D.$ Therefore, structure constants are in
\begin{tiny}
\[ \Sigma_1 = \{ ( \sqrt{1+t}, \sqrt{2}, 1, \sqrt{1-t}, \sqrt{1-t},
\sqrt{2}, \sqrt{1+t} ) \, : \, -1 < t < 1 \}. \]
\end{tiny}
}
\item{ If the sign vector is $(0,0,0,0,0,0,1),$ we get
\[ 0 = -(2+s) + (1-s-t) - (1 +t) = -2 - 2s - 2t,\] hence $s + t=-1. $
and structure constants are
\begin{tiny}
\[ \Sigma_3 = \{ (\sqrt{-s}, \sqrt{2+s}, 1, \sqrt{2}, \sqrt{2},
\sqrt{2+s}, -\sqrt{-s}) \, : \, -2 < s < 0 \}\] \end{tiny}}
\item{In the last case, when signs are encoded by $(0,0,0,0,0,1,1),$
we have
\[ 0 = (2+s) + (1-s-t) - (1 +t) = 2  - 2t, \] hence $ t=1. $
The
corresponding structure constants are
\begin{tiny}
\[ \Sigma_2 = \{ ( \sqrt{2}, \sqrt{2+s}, 1, \sqrt{-s},
\sqrt{-s}, -\sqrt{2+s}, -\sqrt{2}) \, : \, -2 < s < 0 \}. \]
\end{tiny} }\end{itemize} In sum, each Lie algebra in $\calS_\Lambda(\boldR)$
is isomorphic to precisely one Lie algebra whose structure constants
are encoded by a vector in $\Sigma = \Sigma_1 \cup \Sigma_2 \cup
\Sigma_3.$

Now we find an alternate parametrization using 
$\Sigma(T,\Delta)$ instead of $\Sigma(T,\Delta^{1/2}).$ In this case, the Jacobi
Identity becomes
\begin{multline}\label{JI} -\sign(\alpha_{13}^5) \sign (\alpha_{25}^7)
(2 + s)^2 +\\ \sign(\alpha_{23}^6) \sign (\alpha_{16}^7) (1-s-t)^2 +
\sign(\alpha_{12}^4)\sign (\alpha_{34}^7) (1 + t)^2 = 0.\end{multline}
When the sign vector is $(0,0,0,0,0,0,0),$ the Jacobi Identity becomes
\[ 2t^2 + 2st - 6s - 2 = 0,\] so
\begin{tiny}
\[ \Sigma_1^\prime =
\{ ( 1 + t, 2+ s(t), 1,1-s(t)-t, 1-s(t)-t, 2 + s(t),1+t) 
\, : \, -1 < t < 1 \}.\]
\end{tiny} 
where $s(t) = \frac{1-t^2}{t - 3}.$ We may solve for
$\Sigma_2^\prime$ and $\Sigma_3^\prime$ in a similar manner to get the
parametrizing set $\Sigma^\prime = \Sigma_1^\prime \cup
\Sigma_2^\prime \cup \Sigma_3^\prime,$ where
\begin{tiny}
\begin{align*} 
\Sigma_2^\prime
&= \{ ( 1 + t_2(s), 2+ s, 1,1-s-t_2(s), 1-s-t_2(s), 2 + s, -1-t_2(s)) \, : \,
-2<s<0 \} \\
\Sigma_3^\prime &= \{ ( 1 + t_3(s), 2+ s, 1, 1-s-t_3(s),
1-s-t_3(s), -2 - s, -1-t_3(s)) \, : \, -2 < s < 0 \},
\end{align*}
\end{tiny} with $t_2(s)
=\frac{3s+2}{s-2}$ and $t_3(s) = \frac{s^2 + s + 2}{2-s}.$
  \end{example}

\begin{example}\label{example-1e} 
Let $\Lambda$ be as in Examples \ref{example-1a}, 
\ref{example-1b}, \ref{example-1c}, and
 \ref{example-1d}. 
The Jacobi Identity for elements of the
stratum $\calS_\Lambda(\boldR)$ 
reduces to two equations, one for each of the
quadruples $(1,2,3,6) $ and $ (1,2,4,7):$ 
\begin{align*}
 0 & =\alpha_{15}^6 \alpha_{23}^5 - \alpha_{13}^4 \alpha_{24}^6 \\
0&=
 \alpha_{16}^7 \alpha_{24}^6 - \alpha_{14}^5 \alpha_{25}^7 -
\alpha_{12}^3 \alpha_{34}^7  .\end{align*}
There are four sets of  sign choices to consider, one for each element of 
the simple transversal for the $\rho_{\hat Y}$ action.  
We need to solve these equations for elements of some 
set of form $\Sigma(T,\Delta_{\bfa_0})^p.$

For example, if the sign vector is $\bf0,$ 
substituting the values of $\bfa(s,t,u)$ from \eqref{a(s,t,u)}
 into the equations above gives
\begin{align*}
0&=6s - u + su\\
0&=- 2t^2 - 2ut - s + 5u - su.\end{align*}
 This set, and the other components of  $\Sigma(T,\Delta_{\bfa_0})
 \subseteq  \calL_7(\boldR)$
 arising from other sign choices, 
 can be parametrized by established  methods.    
\end{example}

\bibliographystyle{alpha}



\begin{thebibliography}{Pay14b}

\bibitem[ABGMVG96]{nil-dim8-moduli}
Jos{\'e}~Mar{\'{\i}}a Ancochea-Berm{\'u}dez, Jos{\'e}~Ram{\'o}n
  G{\'o}mez-Martin, Gerardo Valeiras, and Michel Goze.
\newblock Sur les composantes irr\'eductibles de la vari\'et\'e des lois
  d'alg\`ebres de {L}ie nilpotentes.
\newblock {\em J. Pure Appl. Algebra}, 106(1):11--22, 1996.

\bibitem[Arr11]{arroyo-11}
Romina~M. Arroyo.
\newblock Filiform nilsolitons of dimension 8.
\newblock {\em Rocky Mountain J.\ Math}, 41(4):1025--1044, 2011.

\bibitem[BR90]{benedetti-risler}
Riccardo Benedetti and Jean-Jacques Risler.
\newblock {\em Real algebraic and semi-algebraic sets}.
\newblock Actualit\'es Math\'ematiques. [Current Mathematical Topics]. Hermann,
  Paris, 1990.

\bibitem[Bra74]{bratzlavsky-71}
Firmin Bratzlavsky.
\newblock Sur les alg\`ebres admettant un tore d'automorphismes donn\'e.
\newblock {\em J. Algebra}, 30:305--316, 1974.

\bibitem[Bur06]{burde-06}
Dietrich Burde.
\newblock Characteristically nilpotent {L}ie algebras and symplectic
  structures.
\newblock {\em Forum Math.}, 18(5):769--787, 2006.

\bibitem[Ebe03]{eberlein-03}
Patrick Eberlein.
\newblock The moduli space of 2-step nilpotent {L}ie algebras of type
  {$(p,q)$}.
\newblock In {\em Explorations in complex and {R}iemannian geometry}, volume
  332 of {\em Contemp. Math.}, pages 37--72. Amer. Math. Soc., Providence, RI,
  2003.

\bibitem[Gon98]{gong-98}
Ming-Peng Gong.
\newblock {\em Classification of nilpotent {L}ie algebras of dimension 7 (Over
  algebraically closed fields and $\boldR$)}.
\newblock PhD thesis, The University of Waterloo, 1998.

\bibitem[Gor72]{gordon-72}
W.~B. Gordon.
\newblock On the diffeomorphisms of {E}uclidean space.
\newblock {\em Amer. Math. Monthly}, 79:755--759, 1972.

\bibitem[GT99]{galitski-timashev}
L.~Yu. Galitski and D.~A. Timashev.
\newblock On classification of metabelian {L}ie algebras.
\newblock {\em J. Lie Theory}, 9(1):125--156, 1999.

\bibitem[Hak91]{hakimjanov}
You.~B. Hakimjanov.
\newblock Vari\'et\'e des lois d'alg\`ebres de {L}ie nilpotentes.
\newblock {\em Geom. Dedicata}, 40(3):269--295, 1991.

\bibitem[KP13]{kadioglu-payne}
H{\"u}lya Kadioglu and Tracy~L. Payne.
\newblock Computational methods for nilsoliton metric {L}ie algebras {I}.
\newblock {\em J. Symbolic Comput.}, 50:350--373, 2013.

\bibitem[Mil04]{millionschikov}
Dmitri~V. Millionschikov.
\newblock Graded filiform {L}ie algebras and symplectic nilmanifolds.
\newblock In {\em Geometry, topology, and mathematical physics}, volume 212 of
  {\em Amer. Math. Soc. Transl. Ser. 2}, pages 259--279. Amer. Math. Soc.,
  Providence, RI, 2004.

\bibitem[Mor58]{morozov-58}
V.~V. Morozov.
\newblock Classification of nilpotent {L}ie algebras of sixth order.
\newblock {\em Izv. Vys\v s. U\v cebn. Zaved. Matematika}, 1958(4
  (5)):161--171, 1958.

\bibitem[Nik08]{nikolayevsky-simple}
Yuri Nikolayevsky.
\newblock Einstein solvmanifolds with a simple {E}instein derivation.
\newblock {\em Geom. Dedicata}, 135:87--102, 2008.

\bibitem[Pay10]{payne-09b}
Tracy~L. Payne.
\newblock The existence of soliton metrics for nilpotent {L}ie groups.
\newblock {\em Geom. Dedicata}, 145:71--88, 2010.

\bibitem[Pay12]{payne-12a}
Tracy~L. Payne.
\newblock Geometric invariants for nilsoliton metric {L}ie algebras with
  applications to moduli spaces of nilsoliton metrics.
\newblock {\em Ann. Global Anal. Geom.}, 41(2):139--160, Feb. 2012.

\bibitem[Pay14a]{payne-index}
Tracy~L. Payne.
\newblock Applications of index sets and {N}ikolayevsky derivations to positive
  rank nilpotent {L}ie algebras.
\newblock {\em J. Lie Theory}, 24(1):1--27, 2014.

\bibitem[Pay15a]{payne-comp-II}
Tracy~L. Payne.
\newblock Computational methods for nilsoliton metric {L}ie algebras {II}.
\newblock In preparation, 2015.

\bibitem[Pay15b]{payne-parametrizations}
Tracy~L. Payne.
\newblock Parametrizations of varieties of nilpotent {L}ie algebras.
\newblock In preparation, 2015.

\bibitem[See93]{seeley-93}
Craig Seeley.
\newblock {$7$}-dimensional nilpotent {L}ie algebras.
\newblock {\em Trans. Amer. Math. Soc.}, 335(2):479--496, 1993.

\bibitem[Ver66]{vergnethesis}
M.~Vergne.
\newblock {\em Vari\'et\'e des alg\`ebres de {L}ie nilpotentes}.
\newblock PhD thesis, Paris, 1966.

\bibitem[Yom15]{yomdin-14}
Y.~Yomdin.
\newblock Smooth parametrizations in dynamics, analysis, diophantine and
  computational geometry.
\newblock {\em Japan Journal of Industrial and Applied Mathematics}, pages
  1--25, 2015.

\end{thebibliography}
\end{document}